\newtheorem{thm}{Theorem}[section]
\newtheorem*{thmA}{Theorem A}
\newtheorem*{thmB}{Theorem B}
\newtheorem{cor}[thm]{Corollary}
\newtheorem{lem}[thm]{Lemma}
\newtheorem{prop}[thm]{Proposition}
\theoremstyle{definition}
\newtheorem*{ack}{Acknowledgments}
\newtheorem{prob}{Problem}[section]
\theoremstyle{remark}
\newtheorem{rem}[thm]{Remark}
\numberwithin{equation}{section}
\numberwithin{figure}{section}
\renewcommand{\(}{\left(}
\renewcommand{\)}{\right)}
\newcommand{\mrm}{\mathrm}
\newcommand{\Vol}{\mrm{Vol}}
\newcommand{\Ric}{\mrm{Ric}}
\newcommand{\divv}{\mrm{div}}
\newcommand{\metric}[2]{\ensuremath{\langle #1, #2\rangle}}  
\begin{document}
	\title[Heintze-Karcher in warped product manifolds]{New Heintze-Karcher type inequalities in sub-static warped product manifolds}
	
	\author[H. Li]{Haizhong Li}
	\address{Department of Mathematical Sciences, Tsinghua University, Beijing 100084, P.R. China}
	\email{\href{mailto:lihz@tsinghua.edu.cn}{lihz@tsinghua.edu.cn}}
	
	\author[Y. Wei]{Yong Wei}
	\address{School of Mathematical Sciences, University of Science and Technology of China, Hefei 230026, P.R. China}
	\email{\href{mailto:yongwei@ustc.edu.cn}{yongwei@ustc.edu.cn}}
	
	\author[B. Xu]{Botong Xu}
	\address{Department of Mathematics, Technion-Israel Institute of Technology, Haifa 32000, Israel}
	\email{\href{mailto:botongxu@campus.technion.ac.il}{botongxu@campus.technion.ac.il}}

	\keywords{Heintze-Karcher inequality, shifted principal curvatures, unit normal flow, sub-static manifold}
	\subjclass[2020]{53C42, 53C24, 53C21}

	
\begin{abstract}
	In this paper, we prove Heintze-Karcher type inequalities involving the shifted mean curvature for smooth bounded domains in  certain sub-static warped product manifolds. In particular, we prove a Heintze-Karcher-type inequality for non mean-convex domains in the hyperbolic space.  As applications, we obtain uniqueness results for hypersurfaces satisfying a class of curvature equations.
\end{abstract} 
	\maketitle

\section{Introduction}
Let $\mathbb{R}^{n+1}$ denote the $(n+1)$-dimensional Euclidean space.  By using the moving plane method, Alexandrov \cite{Alex56} proved that geodesic spheres are the only embedded closed hypersurfaces with constant mean curvature in $\mathbb{R}^{n+1}$, which is known as the famous Alexandrov theorem.  Later, by establishing new integral formulas on closed hypersurfaces in $\mathbb{R}^{n+1}$, Reilly \cite{Rei77} gave a new proof of the Alexandrov theorem. Moreover, inspired by the volume estimate by Heintze and Karcher \cite{HK78}, Ros \cite{Ros87} gave another proof of the Alexandrov theorem by establishing the following Heintze-Karcher inequality for closed, mean-convex hypersurfaces in $\mathbb{R}^{n+1}$. Let $\Omega \subset \mathbb{R}^{n+1}$ be a bounded domain with smooth boundary $\Sigma = \partial \Omega$, and assume that the mean curvature $p_1(\kappa)$ of $\Sigma$ is positive. Then
\begin{equation}\label{s1:HK-original}
	\int_{\Sigma} \frac{1}{p_1(\kappa)} d\mu \geq (n+1) \Vol (\Omega).
\end{equation}
Equality holds in \eqref{s1:HK-original} if and only if $\Sigma$ is umbilic, and thus $\Sigma$ is a geodesic sphere.  
	
For the Heintze-Karcher type inequality in space forms, a breakthrough was due to Brendle\cite{Bre13}. Using the unit normal flow with respect to a conformal metric, he proved a Heintze-Karcher type inequality in certain warped product manifolds. In particular, he proved the following inequality in hyperbolic space $\mathbb{H}^{n+1}$ and hemisphere $\mathbb{S}^{n+1}_+$. Let $\Omega$ be a bounded domain with smooth boundary ${\Sigma} = \partial \Omega$ in  $\mathbb{H}^{n+1}$ (resp. $\mathbb{S}^{n+1}_{+}$). Fix a point $p \in \mathbb{H}^{n+1}$ (resp. $\mathbb{S}^{n+1}$), and let $\lambda' = \cosh r(x)$ (resp. $\cos r(x)$), where $r(x) = dist(x, p)$ is the distance function to $p$. Assume that the mean curvature $p_1(\kappa)$ of $\Sigma$ is positive. Then
\begin{equation}\label{s1:HK-Brendle}
	\int_{ \Sigma} \frac{\lambda'}{p_1(\kappa)} d\mu \geq (n+1) \int_{\Omega} \lambda' dv.
\end{equation}
Equality holds in \eqref{s1:HK-Brendle} if and only if $\Sigma$ is umbilic, and thus $\Sigma$ is a geodesic sphere. The inequality \eqref{s1:HK-Brendle} was later proved by using the generalized Reilly's formula, see \cite{LX19, QX15}.  Similar as the Euclidean case, Brendle's Heintze-Karcher inequality was used to prove Alexandrov type theorem for embedded CMC hypersurfaces in warped product spaces \cite{Bre13}. It was also used to prove Minkowski and isoperimetric type inequalities in various warped product manifolds, see for example \cite{BHW16,deLima16,GWWX15,SX19}. 
	
We call an $n$-tuple $\tilde{\kappa} = (\tilde{\kappa}_1, \ldots, \tilde{\kappa}_n)$ the {\em shifted principal curvatures} of a hypersurface $\Sigma$ if there is a constant $\varepsilon$ such that $\tilde{\kappa}_i = \kappa_i -\varepsilon$, $1 \leq i \leq n$, where $\kappa_i$ are the principal curvatures of $\Sigma$. The study of curvature flows and geometric inequalities involving shifted principal curvatures in curved ambient spaces especially in hyperbolic space $\mathbb{H}^{n+1}$ has been attractive in the recent years. See for example \cite{And94,BH17,Lyn22} for fully nonlinear contracting curvature flows with velocity involving shifted principal curvatures $\kappa_i-\varepsilon$ where $\varepsilon$ depends on the sectional curvature lower bound of the ambient manifolds, and  \cite{ACW21, HLW20, LX22, WWZ22} for curvature flows and geometric inequalities in $\mathbb{H}^{n+1}$ involving shifted principal curvatures $\kappa_i-1$. The shifted factor $\varepsilon$ is introduced usually in order to overcome the negative curvature of the background space. 
	
The following Heintze-Karcher type inequality involving the shifted mean curvature was proved by Li-Xu \cite{LX22} in the curve case ($n=1$) and  by Hu-Wei-Zhou \cite{HWZ23} for $n \geq 2$. It was used to obtain a uniqueness result for solutions to the horospherical $p$-Minkowski problem introduced in \cite{LX22}, see \cite[Theorem 1.3]{HWZ23}.
\begin{thmA} [\cite{LX22,HWZ23}] 
	Let $\Omega$ be a bounded domain with smooth boundary $\Sigma = \partial \Omega$ in $\mathbb{H}^{n+1}$. Assume that the mean curvature $p_1(\kappa)$ of $\Sigma$ satisfies $p_1(\kappa) >1$. Then 
	\begin{equation}\label{s1:HK-Hu-Wei-Zhou}
		\int_\Sigma \frac{\lambda' -u}{p_1(\kappa) -1} d\mu \geq (n+1) \int_\Omega \lambda' d v,
	\end{equation}
	where $u = \metric{\sinh r \partial_r}{\nu}$ is the support function of $\Sigma$, and $\nu$ is the outward unit normal of $\Sigma$. Equality holds in \eqref{s1:HK-Hu-Wei-Zhou} if and only if $\Sigma$ is umbilic, and thus $\Sigma$ is a geodesic sphere.
\end{thmA}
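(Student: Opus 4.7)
The plan is to follow the conformal normal-flow strategy of Brendle \cite{Bre13}, adapted so that the $-1$ shift in the principal curvatures is built in from the start. The structural observation is that $\mathbb{H}^{n+1}$ is sub-static with potential $V = \lambda' = \cosh r$, and the weight on $\Sigma$ rewrites cleanly as $\lambda' - u = V - \langle \nabla V, \nu \rangle$. Moreover, horospheres are the umbilic hypersurfaces with $\kappa_i \equiv 1$, i.e., with $\tilde\kappa := \kappa - 1 = 0$, so the shift is the natural horospherical one; correspondingly, $\lambda' - u$ is a Busemann-type weight (on the geodesic sphere of radius $r_0$ it equals $e^{-r_0}$, the Busemann factor at the antipodal point at infinity).

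I would then introduce a one-parameter family $\Sigma_t = F_t(\Sigma) \subset \bar\Omega$, $t \in [0, T(x)]$, obtained by flowing $\Sigma$ inward in a Busemann-adapted conformal metric $\hat g = e^{-2\beta}\, g_{\mathbb{H}}$ (equivalently, in the upper half-space model). Under such a conformal change, the standard transformation of the second fundamental form converts the hyperbolic shifted curvatures $\tilde\kappa_i$ into ordinary principal curvatures of a parallel flow, which satisfy a Riccati ODE in $t$; in particular, the positivity $p_1(\tilde\kappa) > 0$ propagates up to the focal time $T(x)$. The Jacobian $J(x,t)$ of $F$ obeys a closed ODE, and integrating this ODE from $0$ to $T(x)$ together with the arithmetic-harmonic mean inequality for the shifted principal curvatures,
\begin{equation*}
\frac{1}{p_1(\tilde\kappa)} \;\le\; \frac{1}{n}\sum_{i=1}^{n}\frac{1}{\tilde\kappa_i},
\end{equation*}
applied on the initial surface, yields the pointwise estimate
\begin{equation*}
(n+1) \int_0^{T(x)} \lambda'\bigl(F(x,t)\bigr)\, J(x,t)\, dt \;\le\; \frac{(\lambda' - u)(x)}{p_1(\kappa)(x) - 1}.
\end{equation*}
Integrating this over $\Sigma$ and using that $F$ is a surjection of degree one onto $\bar\Omega$ converts the left-hand integral into $(n+1)\int_\Omega \lambda'\, dv$ via the area formula, which proves \eqref{s1:HK-Hu-Wei-Zhou}.

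The main obstacle is the bookkeeping for the conformal change: one must identify the precise conformal metric in which the shift $\kappa_i \mapsto \kappa_i - 1$ becomes the "flat" parallel-flow curvature, propagate the positivity $\tilde\kappa_i > 0$ along the flow up to the focal locus, and verify surjectivity of $F$ without folding; each of these rests delicately on the sub-static structure of $\mathbb{H}^{n+1}$ and the specific form of the weight $\lambda' - u$. The equality case is then immediate: equality in AM-HM forces all $\tilde\kappa_i$ to coincide pointwise, so $\Sigma$ is umbilic, and the hypothesis $p_1(\kappa) > 1$ excludes horospheres, leaving only geodesic spheres as in the stated rigidity.
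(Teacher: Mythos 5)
Your overall plan (an inward normal-type flow, a monotone quantity, an equality analysis via umbilicity) is in the right family -- Theorem A is indeed proved in \cite{HWZ23} by flowing $\Sigma$ inward, and this paper's Section \ref{sec-pf thm1.1} runs the analogous argument for $\varepsilon=-1$. But the specific mechanism you propose has three genuine gaps. First, the conformal bookkeeping does not work as claimed: under $\hat g = e^{-2\beta}g_{\mathbb{H}}$ with $\beta$ a Busemann function, the Weingarten map transforms by $\hat h_i{}^j = e^{\beta}\bigl(h_i{}^j - \langle \bar\nabla\beta,\nu\rangle\,\delta_i{}^j\bigr)$, so the shift is by the \emph{varying} quantity $\langle\bar\nabla\beta,\nu\rangle\in[-1,1]$, not by the constant $1$. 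The shifted curvatures $\kappa_i-1$ are not the principal curvatures of a parallel flow in any fixed conformal metric, so the Riccati/Jacobian structure you want to import from Brendle is not available in this gauge. Second, and more fundamentally, the pointwise estimate $(n+1)\int_0^{T(x)}\lambda' J\,dt \le (\lambda'-u)(x)/(p_1(\kappa)(x)-1)$ cannot hold along individual normal fibers: the evolution of $(\lambda'-u)/(p_1(\kappa)-1)$ under the unit normal flow produces a term proportional to $u\,d\mu_t$ (see Lemma \ref{lem-evl eq} with $c=-1$, $\varepsilon=1$, $F=-1$, where only the remainder $\mathcal{T}$ vanishes), and this term is disposed of only \emph{after} integrating over the level set, via the global divergence identity $\int_{\Sigma_t^*}u\,d\mu_t=(n+1)\int_{\{w>t\}}\lambda'\,dv$ of Lemma \ref{lem-HWZ}. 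The argument is irreducibly integral, which is precisely why the proof works with the monotone quantity $Q(t)$ rather than a fiberwise bound.

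Third, your use of the arithmetic--harmonic mean inequality $1/p_1(\tilde\kappa)\le \tfrac1n\sum_i 1/\tilde\kappa_i$ requires every shifted principal curvature $\tilde\kappa_i=\kappa_i-1$ to be positive; under the stated hypothesis $p_1(\kappa)>1$ some $\tilde\kappa_i$ may be negative and the inequality fails (e.g.\ $\tilde\kappa=(2,-1)$ gives $1/p_1(\tilde\kappa)=2$ but $\tfrac12\sum 1/\tilde\kappa_i=-\tfrac14$). The actual proof needs only the trace inequality $|h|^2\ge n\,p_1(\kappa)^2$, which is what makes the mean-convexity-type hypothesis $p_1(\kappa)>1$ (rather than horospherical convexity) sufficient, and which also preserves $p_1(\kappa)>1$ along the flow by the maximum principle, as in Lemma \ref{lem-finite area}. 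To repair your argument you should abandon the conformal change entirely, flow with the ordinary unit normal speed $F=-1$ in the hyperbolic metric, and replace the fiberwise AM--HM step by the monotonicity of $Q(t)=e^{-(n+1)t}\bigl(\int_{\Sigma_t^*}\frac{\lambda'-u}{p_1(\kappa)-1}d\mu_t-(n+1)\int_{\{w>t\}}\lambda'\,dv\bigr)$ combined with Lemma \ref{lem-HWZ}; the equality analysis via umbilicity at the end of your proposal is then correct.
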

The main purpose of this paper is to discover more Heintze-Karcher type inequalities involving the shifted mean curvature in hyperbolic space and in more general sub-static warped product manifolds, and then discuss their applications. 

\subsection{Heintze-Karcher type inequalities for non mean-convex domains in $\mathbb{H}^{n+1}$}

Using the unit normal flow, we first prove the following new Heintze-Karcher type inequality for domains which are not necessarily mean-convex in hyperbolic space.
\begin{thm}\label{thm-+1 HK ineq}
	Let $\Omega$ be a bounded domain with smooth boundary $\Sigma = \partial \Omega$ in $\mathbb{H}^{n+1}$. Assume that the mean curvature $p_1(\kappa)$ of $\Sigma$ satisfies $p_1 (\kappa) >-1$. Then 
	\begin{equation}\label{s1:HK-main-eps=-1}
		\int_\Sigma \frac{\lambda' +u}{p_1(\kappa) +1} d\mu \geq (n+1) \int_\Omega \lambda' d v.
	\end{equation}
	Equality holds in \eqref{s1:HK-main-eps=-1} if and only if $\Sigma$ is umbilic, and thus $\Sigma$ is a geodesic sphere.
\end{thm}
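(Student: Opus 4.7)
The plan is to extend Brendle's inward unit normal flow method \cite{Bre13}---used by Hu-Wei-Zhou \cite{HWZ23} to prove Theorem A---to the $+1$-shifted setting, by extracting a Gronwall-type differential inequality for two naturally defined monotone quantities along the flow.

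First I would parametrize the interior of $\Omega$ by the inward flow $X(y,t)=\exp_y(-t\nu(y))$ for $y\in\Sigma$, and set $\Omega_t=\{x\in\overline\Omega: d(x,\Sigma)\ge t\}$ with $\Sigma_t=\partial\Omega_t$. Since $\nabla^2\lambda'=\lambda'g$ in $\H^{n+1}$ and $u=\metric{\nabla\lambda'}{\nu}$, a direct computation gives $\partial_t\lambda'=-u$ and $\partial_tu=-\lambda'$, hence $\partial_t(\lambda'+u)=-(\lambda'+u)$, and $(\lambda'+u)(y,t)=(\lambda'+u)(y)\,e^{-t}$ remains strictly positive along the flow. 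The principal curvatures evolve by $\partial_t\kappa_i=\kappa_i^2-1$, so Cauchy-Schwarz $|A|^2\ge nH^2$ (with $H=p_1(\kappa)$) yields
\begin{equation*}
\partial_tH\ge H^2-1=(H-1)(H+1),
\end{equation*}
with equality iff $\Sigma_t$ is umbilic at the point in question. Because this lower bound vanishes at $H=-1$, the hypothesis $p_1(\kappa)>-1$ is preserved along the flow. Finally $\partial_t\,d\mu_t=-nH\,d\mu_t$.

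Next I would introduce
\begin{equation*}
\Phi(t):=\int_{\Sigma_t}\frac{\lambda'+u}{H+1}\,d\mu_t,\qquad \Psi(t):=\int_{\Omega_t}\lambda'\,dV,
\end{equation*}
together with the divergence identity $\int_{\Sigma_t}u\,d\mu_t=(n+1)\Psi(t)$, which follows from $\divv(\lambda\partial_r)=(n+1)\lambda'$ applied on $\Omega_t$. Using the evolution equations---and crucially the positivity of $\lambda'+u$ and $H+1$ to preserve the direction of the Cauchy-Schwarz bound after dividing by $H+1$---a direct calculation gives
\begin{equation*}
\Phi'(t)\le -(n+1)\int_{\Sigma_t}(\lambda'+u)\,d\mu_t+(n+1)\Phi(t),
\end{equation*}
while $\Psi'(t)=-\int_{\Sigma_t}\lambda'\,d\mu_t$. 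Substituting the divergence identity, these collapse, for $D(t):=\Phi(t)-(n+1)\Psi(t)$, to
\begin{equation*}
D'(t)\le (n+1)D(t),
\end{equation*}
so that $e^{-(n+1)t}D(t)$ is non-increasing. Since $\overline\Omega$ is bounded, the inradius time $T$ is finite and $\Omega_T=\emptyset$, forcing $D(T)=0$; monotonicity then gives $D(0)\ge 0$, which is exactly \eqref{s1:HK-main-eps=-1}. For equality, one must have pointwise equality in Cauchy-Schwarz for all $t$, so $\Sigma$ is totally umbilic and hence a geodesic sphere in $\H^{n+1}$.

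The hard part will be the possible singularities of $\Sigma_t$ at the cut locus of $\Sigma$ and at focal points, which may appear strictly before the inradius time $T$; the evolution identities above are valid only on the smooth part of $\Sigma_t$. I would handle this in the style of \cite{Bre13}: exploit the Lipschitz regularity of $d(\cdot,\Sigma)$, interpret the differential inequality for $D(t)$ almost everywhere in $t$ via absolute continuity of $\Phi$ and $\Psi$, and verify that $\Phi(t)\to 0$ as $t\to T$ using that $\lambda'+u$ remains bounded while $|\Sigma_t|\to 0$ and the blow-up of $H$ near focal points drives $(H+1)^{-1}$ to zero there.
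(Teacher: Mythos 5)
Your proposal follows essentially the same route as the paper: the inward unit normal flow, the evolution identities showing that the troublesome terms cancel precisely when $c=-1$, $\varepsilon=-1$, $F=-1$, the level-set divergence identity $\int_{\Sigma_t}u\,d\mu_t=(n+1)\int_{\Omega_t}\lambda'\,dv$, and the monotonicity of $e^{-(n+1)t}\bigl(\Phi(t)-(n+1)\Psi(t)\bigr)$, which is exactly the paper's quantity $Q(t)$. The only (harmless) deviation is in the endgame: rather than proving $\Phi(t)\to 0$ and $\Omega_T=\emptyset$ (the set $\{w\ge T\}$ is in fact nonempty but of measure zero), the paper simply uses $\Phi\ge 0$ together with the area bound $\mu(\Sigma_t^*)\le e^{nT}\mu(\Sigma)$ to conclude $\liminf_{t\to T}Q(t)\ge 0$, which suffices.
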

Let $p_k: \mathbb{R}^n \to \mathbb{R}$ denote the normalized $k$th elementary symmetric polynomial (see \eqref{s2:def-pm}). For a given hypersurface, we use $(h_{ij})$ and $(g_{ij})$ to denote its second fundamental form and induced metric, respectively. As an application of Theorem \ref{thm-+1 HK ineq}, we prove the following uniqueness result for the solutions to a class of curvature equations.
\begin{thm}\label{thm-+1 shifted-eq}
	Let $n \geq 1$ and $k \in \{1, \ldots, n\}$. Let $\Omega$ be a bounded domain with smooth boundary $\Sigma=\partial \Omega$ in $\mathbb{H}^{n+1}$. Let $\chi: [1, \infty) \times (-\infty, 0) \to \mathbb{R}$ be a $C^1$-smooth function with $\partial_1 \chi \leq 0$ and $\partial_2 \chi \geq 0$.  Assume that $\Sigma$ satisfies the following equation
	\begin{equation}\label{s1:app-main-eps=-1}
		p_k(\kappa+1) = \chi \( \lambda',  -\lambda' -u \).
	\end{equation}
	\begin{enumerate}
		\item 	If $\partial_2 \chi \not\equiv 0$ and $h_{ij} >- g_{ij}$ on $\Sigma$, then $\Sigma$ is a geodesic sphere. Moreover, if one of the inequalities $\partial_1 \chi \leq 0$ and $\partial_2 \chi \geq 0$ is strict,  then $\Sigma$ is a geodesic sphere centered at $p$.
		\item If $\partial_2 \chi \equiv 0$ and $p_k(\kappa +1)>0$ on $\Sigma$, then $\Sigma$ is a geodesic sphere. Moreover, if $\partial_1 \chi<0$, then $\Sigma$ is a geodesic sphere centered at $p$.
	\end{enumerate}
\end{thm}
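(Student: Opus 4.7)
My plan is to combine Theorem~\ref{thm-+1 HK ineq} with a shifted Minkowski-type identity, the Newton-Maclaurin inequality, and the monotonicity hypotheses on $\chi$. The first step is to derive the shifted Minkowski identity
\[
\int_\Sigma p_{k-1}(\kappa+1)(\lambda'+u)\,d\mu = \int_\Sigma u\,p_k(\kappa+1)\,d\mu,
\]
obtained by contracting the Newton tensor of the shifted Weingarten operator $h_{ij}+g_{ij}$ against $\bar\nabla X = \lambda' g$ (where $X = \sinh r\,\partial_r$ is the conformal Killing field of $\mathbb{H}^{n+1}$) and integrating by parts. Divergence-freeness of the shifted Newton tensor follows from the Codazzi equation for $h$, which passes to $h+g$ since $\nabla g = 0$, together with constancy of the ambient sectional curvature.

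Next, Theorem~\ref{thm-+1 HK ineq} combined with the divergence identity $(n+1)\int_\Omega \lambda'\,dv = \int_\Sigma u\,d\mu$ gives
\[
\int_\Sigma u\,d\mu \leq \int_\Sigma \frac{\lambda'+u}{p_1(\kappa+1)}\,d\mu.
\]
The Newton-Maclaurin inequality $p_1(\kappa+1)\,p_{k-1}(\kappa+1) \geq p_k(\kappa+1) = \chi$, valid when $\kappa+1 \in \Gamma_k$, then yields
\[
\int_\Sigma u\,d\mu \leq \int_\Sigma \frac{(\lambda'+u)\,p_{k-1}(\kappa+1)}{\chi}\,d\mu.
\]
In case (1) the hypothesis $h > -g$ gives $\kappa+1 \in \Gamma_n \subset \Gamma_k$ for free. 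In case (2), under only $p_k(\kappa+1)>0$, I would first check $\kappa+1 \in \Gamma_k$ by a standard argument: at the point $x_0 \in \Sigma$ farthest from $p$, comparison with the tangent enclosing geodesic sphere gives $\kappa_i(x_0)\geq \coth r(x_0) > 0$, so $\kappa+1 \in \Gamma_n$ at $x_0$, and the cone condition propagates to all of $\Sigma$ via connectedness and the fact that the component of $\Gamma_k$ containing $\Gamma_n$ is clopen in $\{p_k(\kappa+1)>0\}$.

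The main obstacle is the rigidity. Setting $G := (\lambda'+u)\,p_{k-1}(\kappa+1) - u\chi$, the shifted Minkowski identity and the equation give $\int_\Sigma G\,d\mu = 0$, while the preceding step gives $\int_\Sigma G/\chi\,d\mu \geq 0$. The difficulty is to use the sign conditions $\partial_1\chi \leq 0$ and $\partial_2\chi \geq 0$ to force $G \equiv 0$ pointwise. My approach would exploit the tangential identities $\nabla^\Sigma u = h(\nabla^\Sigma\lambda')$ and $\nabla^\Sigma \chi = \bigl(\partial_1\chi\,I - \partial_2\chi\,(I+h)\bigr)\nabla^\Sigma\lambda'$: under the sign hypotheses together with $h+g > 0$, these show that $\chi$ is monotone in $\lambda'$ along $\Sigma$, and a Chebyshev-type rearrangement combined with the two integral relations should then force $G \equiv 0$. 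Once $G \equiv 0$, equality holds in Newton-Maclaurin, so $\kappa_1+1 = \cdots = \kappa_n+1$ and $\Sigma$ is umbilic, hence a geodesic sphere.

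Finally, on a geodesic sphere $p_k(\kappa+1)$ is a constant determined by the radius, so the equation forces $\chi(\lambda',-\lambda'-u)$ to be constant along $\Sigma$. Under a strict inequality in $\partial_1\chi$ or $\partial_2\chi$ (the ``moreover'' clauses), the tangential identities above then force $\lambda'$ (and hence $u$) to be constant on $\Sigma$, so $\Sigma$ is a level set of $\mathrm{dist}(\cdot,p)$, i.e., a geodesic sphere centered at $p$.
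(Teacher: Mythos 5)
Your overall scaffolding---the shifted Minkowski identity, the Newton--MacLaurin inequality, and Theorem \ref{thm-+1 HK ineq} combined with $\int_\Sigma u\,d\mu=(n+1)\int_\Omega\lambda'\,dv$---matches the paper's, and your reduction of case (2) to the cone condition via an elliptic point is exactly Proposition \ref{prop-suff cond of shifted k-convex}. But the step you yourself flag as ``the main obstacle'' is a genuine gap, and the proposed fix does not work. You end up with two facts: $\int_\Sigma G\,d\mu=0$ (unweighted Minkowski identity) and $\int_\Sigma G/\chi\,d\mu\ge 0$ (Newton--MacLaurin plus Heintze--Karcher). These do not force $G\equiv 0$, nor even $\int_\Sigma G/\chi\,d\mu\le 0$: $G$ could be positive where $\chi$ is small and negative where $\chi$ is large. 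A Chebyshev rearrangement would require $G$ and $\chi$ to be similarly ordered along $\Sigma$, and nothing in your tangential identities delivers this---$G$ contains $p_{k-1}(\kappa+1)$, whose variation is not controlled by $\nabla\lambda'$, and $\lambda'$ is not a monotone parameter on a general $\Sigma$ in any case. Moreover, even granting $G\equiv 0$ pointwise, that is the identity $(\lambda'+u)p_{k-1}(\kappa+1)=u\,p_k(\kappa+1)$, which is not equality in Newton--MacLaurin, so the final inference to umbilicity is also off.

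The paper closes the loop differently: it obtains the reverse bound $\int_\Sigma G/\chi\,d\mu\le 0$ directly. Writing $G=\tfrac1k\dot p_k^{ij}\nabla_j\nabla_i\Phi$ (identity \eqref{p-m+1 s2:Phi-ij} with $\varepsilon=-1$), dividing by $\chi=p_k(\kappa+1)$ and integrating by parts using the divergence-free property \eqref{s2.divfree} of $\dot p_k^{ij}$, the derivative falls on $1/\chi$ and the chain rule together with $\nabla_i\Phi=\langle V,e_i\rangle$ and $\nabla_i u=h_i{}^j\langle V,e_j\rangle$ yields
\begin{equation*}
\int_\Sigma \frac{G}{\chi}\,d\mu=\frac1k\int_\Sigma\frac{\dot p_k^{ij}}{\chi^2}\,\nabla_i\Phi\bigl(\partial_1\chi\,\nabla_j\Phi+\partial_2\chi\,\nabla_j(-\Phi-u)\bigr)d\mu
=\frac1k\int_\Sigma\frac{1}{\chi^2}\sum_i\dot p_k^{ii}\langle V,e_i\rangle^2\bigl(\partial_1\chi-\partial_2\chi\,(\kappa_i+1)\bigr)d\mu,
\end{equation*}
which is $\le 0$ by $\partial_1\chi\le0$, $\partial_2\chi\ge0$, and (in case (1)) $h_{ij}>-g_{ij}$; this is also where the hypothesis $h>-g$ actually enters, not merely through the cone condition. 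Combined with your lower bound this forces equality in the Heintze--Karcher inequality, whence $\Sigma$ is umbilic, and when one of the sign conditions is strict the same computation forces $\langle V,e_i\rangle\equiv0$, giving the centered conclusion. The missing ingredient in your argument is precisely this weighted ($1/\chi$) integration by parts; the unweighted Minkowski identity you derive carries no information about the monotonicity of $\chi$.
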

Since there always exists some point in a closed hypersurface $\Sigma \subset \mathbb{H}^{n+1}$ at which the principal curvatures are all greater than $1$, the following Alexandrov type theorem is a direct consequence of the second case of Theorem \ref{thm-+1 shifted-eq}.
\begin{cor}\label{s1:thm-1.2}
	Let $n \geq 2$ and $k \in \{2, \ldots, n\}$. Let $\Omega$ be a bounded domain with smooth boundary $\Sigma = \partial \Omega$ in $\mathbb{H}^{n+1}$. If the shifted $k$th mean curvature $p_k(\kappa +1)$ is constant on $\Sigma$, then $\Sigma$ is a geodesic sphere.
\end{cor}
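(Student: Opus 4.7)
The plan is to reduce the corollary to case (2) of Theorem \ref{thm-+1 shifted-eq} by choosing the function $\chi$ to be a constant and verifying the positivity hypothesis on $p_k(\kappa+1)$. Since the hypothesis asks only that $p_k(\kappa+1)$ be constant on $\Sigma$, say $p_k(\kappa+1) \equiv c$, I would set $\chi \equiv c$ on $[1,\infty)\times(-\infty,0)$, so that $\chi$ is trivially $C^1$, $\partial_2\chi\equiv 0$, and $\partial_1\chi\equiv 0$. Then the curvature equation \eqref{s1:app-main-eps=-1} is satisfied. What remains is the single nontrivial input: verifying that $p_k(\kappa+1)>0$ on $\Sigma$ so that we are in case (2).

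The key step, which is the only real geometric content, is the claim that every closed hypersurface $\Sigma$ in $\mathbb{H}^{n+1}$ has at least one point where all principal curvatures exceed $1$. To see this I would pick any point $p_0$ in the bounded component $\Omega$ (or in fact any fixed point of $\mathbb{H}^{n+1}$) and consider the restriction of the distance function $r(x)=\mathrm{dist}(x,p_0)$ to $\Sigma$. Let $x_0\in\Sigma$ be a point where $r$ attains its maximum $R$, and let $\nu$ be the outward unit normal. At $x_0$ the hypersurface $\Sigma$ is tangent from the inside to the geodesic sphere $\partial B_R(p_0)$, so by the standard comparison of second fundamental forms the principal curvatures of $\Sigma$ at $x_0$ satisfy
\begin{equation*}
	\kappa_i(x_0) \;\geq\; \coth R \;>\; 1, \qquad 1\leq i\leq n.
\end{equation*}
Consequently $\kappa_i(x_0)+1>2>0$ for all $i$, and therefore $p_k(\kappa+1)(x_0)>0$ by the definition of the normalized elementary symmetric polynomial. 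Since $p_k(\kappa+1)$ is constant on $\Sigma$, this positivity propagates to all of $\Sigma$, which is exactly the hypothesis needed in case (2) of Theorem \ref{thm-+1 shifted-eq}.

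With both hypotheses of that case verified, Theorem \ref{thm-+1 shifted-eq} yields that $\Sigma$ is a geodesic sphere, completing the proof. I do not expect any serious obstacle here: the comparison argument at the farthest point is classical, the sign condition $k\geq 2$ enters only because for $k=1$ the statement would already be a consequence of Brendle's original inequality \eqref{s1:HK-Brendle} (so excluding it costs nothing), and the restriction $n\geq 2$ is inherited from Theorem \ref{thm-+1 shifted-eq}. The center of the resulting sphere is unspecified precisely because our $\chi$ is constant, so $\partial_1\chi$ is not strictly negative and the refinement in case (2) does not apply.
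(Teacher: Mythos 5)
Your proposal is correct and follows essentially the same route as the paper, which deduces the corollary from case (2) of Theorem \ref{thm-+1 shifted-eq} by observing that a closed hypersurface in $\mathbb{H}^{n+1}$ always has a point where all principal curvatures exceed $1$, so the constant value of $p_k(\kappa+1)$ must be positive. Your comparison argument at the farthest point from a fixed center merely supplies the standard proof of that existence claim, which the paper leaves implicit.
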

For $k=1$, Corollary \ref{s1:thm-1.2} reduces to the Alexandrov theorem for embedded closed hypersurfaces with constant mean curvature in hyperbolic space \cite{MR91}.

\subsection{Heintze-Karcher type inequalities in sub-static warped product manifolds}

Given \eqref{s1:HK-Brendle}, Theorem A and Theorem \ref{thm-+1 HK ineq}, it is natural to ask if there are Heintze-Karcher type inequalities corresponding to a general shifted factor $\varepsilon$ in certain warped product manifolds so that these results can be regarded as its special cases, see \eqref{s1:shifted HK general-eps-1} below. But it seems a challenge to use the flow method to get such a geometric inequality directly, even in the space forms (see the discussions in Remark \ref{rem-discussion}). However, by imposing  certain  convexity hypotheses on the hypersurfaces and using a different approach, we can prove this kind of Heintze-Karcher type inequalities in certain warped product manifolds, see Theorem \ref{thm-shifted H-K ineq} below.
	
To illustrate our approach, recall the following special case of the Minkowski's second inequality in convex geometry, see e.g. \cite[Theorem 7.2.1]{Sch14}. Let $\Omega$ be a bounded domain with smooth, strictly convex boundary $\Sigma =\partial \Omega$ in $\mathbb{R}^{n+1}$. Then
\begin{equation}\label{s1:min-2-ineq}
	{\rm Area}(\Sigma)^2 \geq (n+1) \Vol(\Omega) \int_\Sigma p_1(\kappa) d\mu.
\end{equation}
Equality holds in \eqref{s1:min-2-ineq} if and only if $\Sigma$ is a round sphere. In this setting, one may observe that inequality \eqref{s1:min-2-ineq} implies \eqref{s1:HK-original}: 
\begin{align}
	\int_{ \Sigma} \frac{1}{p_1(\kappa)} d\mu \geq&\frac{ (n+1) \Vol(\Omega)}{ {\rm Area}(\Sigma)^2\  } \int_\Sigma p_1(\kappa) d\mu \int_{ \Sigma} \frac{1}{p_1(\kappa)} d\mu \nonumber \\
	\geq& (n+1) \Vol(\Omega), \label{s1:original idea}
\end{align}
where we used the Cauchy-Schwarz inequality. 
	
Let $M^{n+1} = [0, \bar{r}) \times N$ be a warped product manifold with metric $\bar{g} =dr^2+\lambda(r)^2 g_N$, where $N$ is an $n$-dimensional closed manifold. By using a generalized Reilly's formula,  Li-Xia \cite{LX19} proved the following Minkowski type inequality (Theorem B) for static-convex domains when the ambient space $M^{n+1}$ is sub-static with potential $\lambda'(r)$ (see Section \ref{subsec-hyper WP}). Here, we call a smooth hypersurface  $\Sigma \subset M^{n+1}$ {\em static-convex} if the inequality (see \cite{BW14})
\begin{equation}\label{s1:static-convex}
	h_{ij} \geq \frac{\bar{\nabla}_\nu \lambda'}{\lambda'} 
\end{equation}
holds everywhere on $\Sigma$, where $\bar{\nabla}$ denotes the connection on $(M^{n+1}, \bar{g})$. 
\begin{thmB}[\cite{LX19}]
	Let $M^{n+1} =[0, \bar{r}) \times N$ $(n \geq 2)$ be a sub-static warped product manifold with metric $\bar{g} =dr^2+\lambda(r)^2 g_N$ and potential $\lambda'(r)$, where $(N, g_N)$ is an $n$-dimensional closed manifold.
	\begin{enumerate}
		\item Let $\Omega \subset M^{n+1}$ be a bounded domain with a connected, static-convex boundary $\Sigma = \partial \Omega$ on which $\lambda'>0$. Then
		\begin{equation}\label{s1:min2-LX19-1}
			\(\int_{ \Sigma} \lambda' d\mu \)^2 \geq (n+1) \int_{\Omega} \lambda' dv \int_{ \Sigma} \lambda' p_1(\kappa) d\mu.
		\end{equation}
		Moreover, if inequality \eqref{s1:static-convex} holds strictly at some point in $\Sigma$, and equality holds in \eqref{s1:min2-LX19-1}, then $\Sigma$ is umbilic and  of constant mean curvature.
		\item Assume $\lambda(r)$ satisfies condition \ref{s2:H condition} (see Section \ref{subsec-hyper WP}). Let $\Omega$ be a bounded domain with $\partial \Omega = \Sigma \cup \partial M$. Assume in addition that $\Sigma$ is static-convex. Then
		\begin{equation}\label{s1:min2-LX19-2}
			\(\int_{ \Sigma} \lambda' d\mu \)^2 \geq (n+1)\(\int_{\Omega} \lambda' dv + \frac{1}{n+1} \lambda(0)^{n+1} \Vol(N, g_N) \) \int_{ \Sigma} \lambda' p_1(\kappa) d\mu.
		\end{equation}
		Moreover, if inequality \eqref{s1:static-convex} holds strictly at some point in $\Sigma$, then equality holds in \eqref{s1:min2-LX19-2} if and only if $\Sigma$ is a slice $\{r\} \times N$ for some $r \in (0, \bar{r})$.
	\end{enumerate}
\end{thmB}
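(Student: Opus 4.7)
The natural tool here is a generalized Reilly-type formula with potential $V=\lambda'$, as pioneered by Qiu-Xia \cite{QX15} for sub-static warped products. For a smooth function $f$ on $\Omega$ with boundary trace $u:=f|_\Sigma$, the formula reads schematically
\begin{align*}
	&\int_\Omega \lambda'\bigl((\Delta f)^2-|\bar{\nabla}^2 f|^2\bigr)\,dv-\int_\Omega\bigl(\lambda'\overline{\Ric}-\bar{\nabla}^2\lambda'+\bar{\Delta}\lambda'\,\bar{g}\bigr)(\bar{\nabla} f,\bar{\nabla} f)\,dv \\
	&\qquad=\int_\Sigma \lambda'\bigl(H f_\nu^2+2 f_\nu\Delta_\Sigma u+h(\nabla^\Sigma u,\nabla^\Sigma u)\bigr)d\mu-\int_\Sigma(\lambda')_\nu|\nabla^\Sigma u|^2\,d\mu,
\end{align*}
where $H$ is the un-normalised mean curvature. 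The sub-static hypothesis is precisely that the bulk tensor $\lambda'\overline{\Ric}-\bar{\nabla}^2\lambda'+\bar{\Delta}\lambda'\,\bar{g}$ is non-negative, and the static-convex hypothesis \eqref{s1:static-convex} is precisely $\lambda' h-(\lambda')_\nu\,g\geq 0$ along $T\Sigma$; both hypotheses are tailor-made to kill signed terms in the identity above.

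For part (1), I would solve the mixed boundary-value problem
\[
	\bar{\nabla}\cdot(\lambda'\bar{\nabla} f)=c\,\lambda'\ \ \text{in }\Omega,\qquad f_\nu=1\ \ \text{on }\Sigma,
\]
where $c:=(\int_\Sigma \lambda'\,d\mu)/(\int_\Omega \lambda'\,dv)$ is chosen so that the compatibility condition $c\int_\Omega \lambda'\,dv=\int_\Sigma \lambda' f_\nu\,d\mu$ holds automatically. Plugging this $f$ into the Reilly identity, dropping the bulk $\overline{\Ric}$-term and the boundary quadratic $\lambda' h-(\lambda')_\nu g$ in $\nabla^\Sigma u$ (both of the correct sign by the hypotheses), and applying the pointwise Newton inequality $|\bar{\nabla}^2 f|^2\geq(\Delta f)^2/(n+1)$, the identity should collapse to
\[
	\int_\Sigma \lambda' H\,d\mu+2\int_\Sigma \lambda'\,\Delta_\Sigma u\,d\mu\;\leq\;\tfrac{n}{n+1}\int_\Omega \lambda'(\Delta f)^2\,dv.
\]
The cross term on the left is the non-obvious one; it should be handled by integrating by parts along $\Sigma$ into $-2\int_\Sigma\langle\nabla^\Sigma\lambda',\nabla^\Sigma u\rangle d\mu$ and absorbing it using the PDE for $f$ together with the warped-product structure $\lambda'=\lambda'(r)$. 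Finally, the divergence equation and the identity $c^2\int_\Omega \lambda'\,dv=(\int_\Sigma \lambda'\,d\mu)^2/\int_\Omega \lambda'\,dv$ should yield \eqref{s1:min2-LX19-1} after rewriting $H=np_1(\kappa)$.

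Part (2) is analogous on $\Omega$ with $\partial\Omega=\Sigma\cup\partial M$; I would add the Neumann condition $f_\nu=0$ on the inner boundary $\partial M=\{0\}\times N$, and condition \ref{s2:H condition} on $\lambda$ forces the inner-boundary contribution to the Reilly identity to package into precisely the correction $\lambda(0)^{n+1}\Vol(N,g_N)/(n+1)$ visible in \eqref{s1:min2-LX19-2}.

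\textbf{Main obstacle.} The sharpest step is the equality analysis. Pointwise equality in $|\bar{\nabla}^2 f|^2\geq(\Delta f)^2/(n+1)$ forces $\bar{\nabla}^2 f=\tfrac{\Delta f}{n+1}\bar{g}$, which in a warped product typically identifies $f$ with a function of $r$ alone. One must then combine this rigidity with the hypothesis that \eqref{s1:static-convex} is strict at a single point of $\Sigma$—which, via the vanishing of the boundary quadratic form on a small set, forces $\nabla^\Sigma u\equiv 0$ in a neighbourhood—to propagate umbilicity and constancy of the mean curvature to the whole of $\Sigma$ (or, in part (2), to conclude that $\Sigma$ is a slice $\{r\}\times N$). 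Handling the non-sign-definite cross term $2\int_\Sigma \lambda'\Delta_\Sigma u\,d\mu$, where the warped-product structure $\lambda'=\lambda'(r)$ must be invoked in the integration by parts, is the other technical subtlety that requires genuine care.
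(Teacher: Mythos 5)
First, a point of reference: the paper does not prove Theorem~B at all --- it is imported verbatim from Li--Xia \cite{LX19}, and the only thing the paper says about its proof is that it is obtained ``by using a generalized Reilly's formula''. Your proposal therefore cannot be checked against an argument in this paper, only against that attribution; at the level of strategy it does match the cited source: the generalized Reilly identity with potential $V=\lambda'$, the sub-static tensor controlling the bulk term, static-convexity controlling the boundary quadratic form $\lambda' h-(\lambda')_\nu g$, and a trace inequality on the Hessian are exactly the ingredients of \cite{QX15,LX19}.

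As a proof, however, the plan has a concrete gap at its final step. You solve $\bar{\nabla}\cdot(\lambda'\bar{\nabla}f)=c\,\lambda'$ with $f_\nu=1$ and then invoke $|\bar{\nabla}^2f|^2\geq(\bar{\Delta}f)^2/(n+1)$ so that the Reilly identity is controlled by $\tfrac{n}{n+1}\int_\Omega\lambda'(\bar{\Delta}f)^2\,dv$, which you then want to identify with $\tfrac{n}{n+1}c^2\int_\Omega\lambda'\,dv=\tfrac{n}{n+1}\bigl(\int_\Sigma\lambda'\,d\mu\bigr)^2/\int_\Omega\lambda'\,dv$. But your PDE gives $\bar{\Delta}f=c-\langle\bar{\nabla}\log\lambda',\bar{\nabla}f\rangle$, which is not constant, so $\int_\Omega\lambda'(\bar{\Delta}f)^2\,dv\neq c^2\int_\Omega\lambda'\,dv$; worse, Cauchy--Schwarz only yields $\int_\Omega\lambda'(\bar{\Delta}f)^2\,dv\geq\bigl(\int_\Omega\lambda'\bar{\Delta}f\,dv\bigr)^2/\int_\Omega\lambda'\,dv$, a bound in the useless direction. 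Either the boundary value problem must be changed so that $\bar{\Delta}f$ itself is the constant $\int_\Sigma\lambda'\,d\mu/\int_\Omega\lambda'\,dv$ (at the price of losing the constant Neumann data that turns $\int_\Sigma\lambda' H f_\nu^2\,d\mu$ into $\int_\Sigma\lambda' H\,d\mu$), or a Reilly identity adapted to the drift operator must be used, in which case the trace inequality and the $n/(n+1)$ bookkeeping change. Relatedly, the cross term $2\int_\Sigma\lambda' f_\nu\Delta_\Sigma u\,d\mu$ --- which you correctly single out as the non-obvious one --- neither vanishes in the weighted setting (with $f_\nu=1$ it equals $2\int_\Sigma u\,\Delta_\Sigma\lambda'\,d\mu$, which is generically nonzero) nor has a sign; handling it is precisely the substance of Li--Xia's argument, not a routine absorption. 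The same issue recurs in part (2), where the correction term $\tfrac{1}{n+1}\lambda(0)^{n+1}\Vol(N,g_N)$ arises from a flux computation on the horizon (compare \eqref{s4:div fml-has horizon}) rather than from a Neumann contribution that packages automatically. In short, the plan names the right tools, but the one step it commits to in detail would fail as written, and the decisive steps are left unexecuted.
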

Since space forms are special types of sub-static warped product manifolds,  Theorem B recovers a previous result of Xia \cite{Xia16} when $M^{n+1} \in \{ \mathbb{H}^{n+1}, \mathbb{S}^{n+1}_+\}$. Remark that $\Sigma$ is static-convex when it is horo-convex ($\kappa \geq 1$) and $M^{n+1} = \mathbb{H}^{n+1}$, and when it is convex ($\kappa \geq 0$) and $M^{n+1} =\mathbb{S}^{n+1}_+$ \cite{Xia16}. We also note that the inequality \eqref{s1:static-convex} holds strictly at some point on $\Sigma$ when $M$ is a space form. For other geometric inequalities for static-convex domains, see \cite{BW14, HL22,PY23}.

In the following,  we adapt the idea presented in \eqref{s1:min-2-ineq} - \eqref{s1:original idea} to prove Heintze-Karcher type inequalities involving a general shifted factor $\varepsilon \in \mathbb{R}$.
\begin{thm}\label{thm-shifted H-K ineq}
	Let $M^{n+1} =[0, \bar{r}) \times N$ $(n \geq 2)$ be a sub-static warped product manifold with metric $\bar{g} =dr^2+\lambda(r)^2 g_N$ and potential $\lambda'(r)$, where $N$ is an $n$-dimensional closed manifold.
	\begin{enumerate}
		\item Let $\Omega \subset M^{n+1}$ be a bounded domain with a connected, static-convex boundary $\Sigma = \partial \Omega$ on which $\lambda'>0$.  Let  $\varepsilon$ be a real number such that the following inequality
		\begin{equation}\label{s1:assump in HK}
			\( \lambda'-\varepsilon u \)\( p_1(\kappa) -\varepsilon \) > 0
		\end{equation}
		holds everywhere on $\Sigma$, where $u = \metric{\lambda \partial_r}{\nu}$. Then
		\begin{equation}\label{s1:shifted HK general-eps-1}
			\int_\Sigma \frac{\lambda' - \varepsilon u}{p_1 (\kappa) -\varepsilon}d\mu \geq (n+1) \int_\Omega \lambda' dv.
		\end{equation}
		Moreover, if inequality \eqref{s1:static-convex} holds strictly at some point in $\Sigma$, and equality holds in \eqref{s1:shifted HK general-eps-1}, then $\Sigma$ is umbilic and  of constant mean curvature.
		\item Assume $\lambda(r)$ satisfies condition \ref{s2:H condition}. Let $\Omega$ be a bounded domain with $\partial \Omega = \Sigma \cup \partial M$. Assume in addition that $\Sigma$ is static-convex.  Let  $\varepsilon$ be a real number such that the inequality \eqref{s1:assump in HK} 
		holds everywhere on $\Sigma$. Then
		\begin{equation}\label{s1:shifted HK general-eps-2}
			\int_\Sigma \frac{\lambda' - \varepsilon u}{p_1 (\kappa) -\varepsilon}d\mu \geq (n+1)\int_{\Omega} \lambda' dv +\lambda(0)^{n+1} \Vol(N, g_N). 
		\end{equation}
		Moreover, if inequality \eqref{s1:static-convex} holds strictly at some point in $\Sigma$, then equality holds in \eqref{s1:shifted HK general-eps-2} if and only if $\Sigma$ is a slice $\{r\}\times N$ for some $r \in (0, \bar{r})$.
	\end{enumerate}
\end{thm}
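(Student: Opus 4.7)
The plan is to reduce both inequalities to the Minkowski-type inequalities \eqref{s1:min2-LX19-1} and \eqref{s1:min2-LX19-2} of Theorem B by a single weighted Cauchy-Schwarz inequality, in direct analogy with the derivation \eqref{s1:min-2-ineq}--\eqref{s1:original idea} of Heintze-Karcher from Minkowski's second inequality in Euclidean space.

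By assumption \eqref{s1:assump in HK}, the weight $w := (\lambda' - \varepsilon u)(p_1(\kappa) - \varepsilon)$ is pointwise strictly positive on $\Sigma$, and both $\lambda' - \varepsilon u$ and $p_1(\kappa) - \varepsilon$ are nowhere zero. Cauchy-Schwarz applied to $\lambda' - \varepsilon u$ with positive weight $w$ then gives
\[
\left(\int_\Sigma (\lambda' - \varepsilon u)\, d\mu\right)^2 \leq \int_\Sigma \frac{(\lambda' - \varepsilon u)^2}{w}\, d\mu \cdot \int_\Sigma w\, d\mu = \int_\Sigma \frac{\lambda' - \varepsilon u}{p_1(\kappa) - \varepsilon}\, d\mu \cdot \int_\Sigma w\, d\mu.
\]

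To evaluate the outer integrals I will use two standard integral identities in warped product manifolds. Since $X = \lambda \partial_r$ is conformal with $\bar\nabla X = \lambda' \bar g$ and $\divv X = (n+1)\lambda'$, the divergence theorem yields $\int_\Sigma u\, d\mu = (n+1)\int_\Omega \lambda'\, dv$ in case (1), and $\int_\Sigma u\, d\mu = (n+1)\int_\Omega \lambda'\, dv + \lambda(0)^{n+1}\Vol(N, g_N)$ in case (2), where the additional term comes from $u|_{\partial M} = -\lambda(0)$. Moreover, the Hsiung-Minkowski identity for closed hypersurfaces in warped products gives $\int_\Sigma u\, p_1(\kappa)\, d\mu = \int_\Sigma \lambda'\, d\mu$. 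Writing $A = \int_\Sigma \lambda'\, d\mu$, $B = \int_\Sigma \lambda' p_1(\kappa)\, d\mu$, and letting $\tilde C$ denote the target lower bound in \eqref{s1:shifted HK general-eps-1} or \eqref{s1:shifted HK general-eps-2} respectively (so that $\int_\Sigma u\, d\mu = \tilde C$), a direct expansion gives
\[
\int_\Sigma (\lambda' - \varepsilon u)\, d\mu = A - \varepsilon \tilde C, \qquad \int_\Sigma w\, d\mu = B - 2\varepsilon A + \varepsilon^2 \tilde C > 0.
\]

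Substituting these into the Cauchy-Schwarz bound and dividing by $\int_\Sigma w\, d\mu > 0$ produces
\[
\int_\Sigma \frac{\lambda' - \varepsilon u}{p_1(\kappa) - \varepsilon}\, d\mu \geq \frac{(A - \varepsilon \tilde C)^2}{B - 2\varepsilon A + \varepsilon^2 \tilde C}.
\]
The algebraic identity $(A - \varepsilon \tilde C)^2 - \tilde C (B - 2\varepsilon A + \varepsilon^2 \tilde C) = A^2 - \tilde C B$ then shows that the desired bound $\int_\Sigma \tfrac{\lambda' - \varepsilon u}{p_1 - \varepsilon}\, d\mu \geq \tilde C$ is \emph{equivalent} to $A^2 \geq \tilde C B$, which is precisely \eqref{s1:min2-LX19-1} in case (1) and \eqref{s1:min2-LX19-2} in case (2) of Theorem B. For the equality statement, if equality holds in \eqref{s1:shifted HK general-eps-1} or \eqref{s1:shifted HK general-eps-2}, then equality must also hold both in the Cauchy-Schwarz step and in the corresponding Minkowski-type inequality, so the rigidity part of Theorem B (available once \eqref{s1:static-convex} is strict at some point of $\Sigma$) delivers the stated geometric conclusion. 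I anticipate the main subtlety to be purely organizational---the correct handling of the inner-boundary contribution in case (2) and the verification that no sign degeneracy spoils the Cauchy-Schwarz step---rather than any new geometric estimate beyond Theorem B itself.
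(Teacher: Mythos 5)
Your proposal is correct and follows essentially the same route as the paper: both reduce the shifted Heintze--Karcher inequality to the Minkowski-type inequality of Theorem B (rewritten as $(\int_\Sigma \lambda' d\mu)^2 \geq \int_\Sigma u\, d\mu \int_\Sigma \lambda' p_1(\kappa) d\mu$ via the divergence theorem) using the first Minkowski identity and the same weighted Cauchy--Schwarz step, with the equality case inherited from the rigidity in Theorem B. The only difference is cosmetic---you package the expansion as the algebraic identity $(A-\varepsilon\tilde C)^2 - \tilde C(B-2\varepsilon A+\varepsilon^2\tilde C) = A^2-\tilde C B$, while the paper expands both sides term by term---so no further comparison is needed.
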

When $M^{n+1}= \mathbb{H}^{n+1}$ and $\varepsilon$ is a constant with $|\varepsilon|\leq 1$, there holds $\lambda'-\varepsilon u \geq \cosh r- |\varepsilon| \sinh r \geq e^{-r}>0$ on $\Sigma$ automatically. In this case, the assumption \eqref{s1:assump in HK} reduces to $p_1(\kappa)>\varepsilon$. 
\begin{rem} For static-convex $\Sigma$,
	\begin{enumerate}
		\item 	 Theorem \ref{thm-shifted H-K ineq} reduces to the results proved by Brendle \cite{Bre13} when $\varepsilon =0$.
		\item  	Theorem \ref{thm-shifted H-K ineq} reduces to Theorem A when $\varepsilon =1$ and $M^{n+1} = \mathbb{H}^{n+1}$.
		\item 	Theorem \ref{thm-shifted H-K ineq} reduces to Theorem \ref{thm-+1 HK ineq} when $\varepsilon =-1$ and $M^{n+1} = \mathbb{H}^{n+1}$.
	\end{enumerate}
\end{rem}
Based on the above remark, we would like to propose the following problem.
\begin{prob}
	When $\varepsilon \notin \{ -1,0,1\}$, is the static-convex assumption of $\Sigma =\partial \Omega$ necessary in Theorem \ref{thm-shifted H-K ineq}, especially when $M^{n+1} =\mathbb{H}^{n+1}$ and $\varepsilon \in (-1,0) \cup (0,1)$?
\end{prob}

We next discuss an application of Theorem \ref{thm-shifted H-K ineq}. Recall that a hypersurface $\Sigma \subset M^{n+1}$ is called {\em strictly shifted $k$-convex} if the shifted principal curvatures $(\kappa -\varepsilon)$ of $\Sigma$ lie in the G{\aa}rding cone $\Gamma^+_k$ (see Section \ref{subsec-ele sym func}). 
\begin{thm}\label{thm-s1:shifted-eq}
	Let $n \geq 2$ and $k \in \{1, \ldots, n\}$. Let $\Omega$ be a bounded domain with smooth, static-convex boundary $\Sigma=\partial \Omega$ in space form $\mathbb{N}^{n+1}(c)$, and assume that $\lambda' -\varepsilon u > 0$ on $\Sigma$, where $\varepsilon \in \mathbb{R}$ is a constant. Let $\chi:\mathbb{R} \times \mathbb{R} \to \mathbb{R}$ be a $C^1$-smooth function with $\partial_1 \chi \leq 0$ and $\partial_2 \chi \geq 0$.  Assume that $\Sigma$ satisfies the following equation
	\begin{equation}\label{s1:shifted-eq}
		p_k(\kappa- \varepsilon) = \chi \(\Phi(r),  \varepsilon \Phi(r) -u \),
	\end{equation}
	 where $\Phi$ is a primitive function of $\lambda$.
	\begin{enumerate}
		\item If $\partial_2 \chi \not\equiv 0$, we further assume that  $h_{ij} >\varepsilon g_{ij}$  on $\Sigma$. Then $\Sigma$ is a geodesic sphere. Moreover, if one of the inequalities $\partial_1 \chi \leq 0$ and $\partial_2 \chi \geq 0$ is strict,  then $\Sigma$ is a geodesic sphere centered at $p$.
		
		\item If $\partial_2 \chi \equiv 0$, we further assume that $\Sigma$ is strictly shifted $k$-convex. Then $\Sigma$  is a geodesic sphere.  Moreover, if $\partial_1 \chi<0$, then $\Sigma$ is a geodesic sphere centered at $p$.
	\end{enumerate}
\end{thm}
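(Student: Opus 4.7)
The plan is to combine three ingredients: the shifted Heintze-Karcher inequality of Theorem~\ref{thm-shifted H-K ineq}, a shifted Minkowski-type identity, and the Newton-MacLaurin inequality, with the monotonicity hypotheses on $\chi$ used to extract the rigidity.

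First I would establish, in the space form $\N^{n+1}(c)$, the shifted Minkowski formula
\begin{equation*}
\int_\Sigma (\lambda'-\varepsilon u)\,p_{k-1}(\kappa-\varepsilon)\,d\mu \;=\; \int_\Sigma u\,p_k(\kappa-\varepsilon)\,d\mu.
\end{equation*}
Since the second fundamental form $h$ is Codazzi in a space form and $g$ is parallel, the shifted tensor $\tilde h:=h-\varepsilon g$ is also Codazzi, so its Newton tensor $P_{k-1}(\tilde h)$ is divergence-free along $\Sigma$. The identity follows from integrating the tangential divergence of $P_{k-1}(\tilde h)^{ij}(X^T)_j$ for the conformal Killing field $X=\lambda\partial_r$ (which satisfies $\bar\nabla X=\lambda'\bar g$, hence $\nabla X^T=\lambda' g - uh$), together with the splitting $h=\tilde h+\varepsilon g$, which converts $-u\,\tr(P_{k-1}(\tilde h)h)$ into $-u\,\tr(P_{k-1}(\tilde h)\tilde h) - \varepsilon u\,\tr P_{k-1}(\tilde h)$ and absorbs $\varepsilon u$ into the $\lambda'$-factor.

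Substituting the equation \eqref{s1:shifted-eq} turns this into $\int_\Sigma(\lambda'-\varepsilon u)p_{k-1}(\kappa-\varepsilon)\,d\mu=\int_\Sigma u\chi\,d\mu$. In both cases of the theorem, $\kappa-\varepsilon$ lies in the G\aa{}rding cone $\Gamma_k^+$ (from $h>\varepsilon g$ in Case~1, by assumption in Case~2), so Newton-MacLaurin gives $p_1(\kappa-\varepsilon)\,p_{k-1}(\kappa-\varepsilon)\geq p_k(\kappa-\varepsilon)=\chi$. Multiplying by $(\lambda'-\varepsilon u)/p_1(\kappa-\varepsilon)>0$ and integrating yields
\begin{equation*}
\int_\Sigma u\chi\,d\mu \;\geq\; \int_\Sigma \frac{\chi(\lambda'-\varepsilon u)}{p_1(\kappa-\varepsilon)}\,d\mu. \qquad \mathrm{(A)}
\end{equation*}
Independently, Theorem~\ref{thm-shifted H-K ineq} together with $\divv_{\bar g}(X)=(n+1)\lambda'$, which rewrites $(n+1)\int_\Omega\lambda'\,dv$ as $\int_\Sigma u\,d\mu$, gives
\begin{equation*}
\int_\Sigma \frac{\lambda'-\varepsilon u}{p_1(\kappa-\varepsilon)}\,d\mu \;\geq\; \int_\Sigma u\,d\mu. \qquad \mathrm{(B)}
\end{equation*}

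Set $f := (\lambda'-\varepsilon u)/p_1(\kappa-\varepsilon) - u$. Then $\mathrm{(A)}$ reads $\int_\Sigma \chi f\,d\mu\leq 0$ and $\mathrm{(B)}$ reads $\int_\Sigma f\,d\mu\geq 0$. When $\chi$ is a positive constant these immediately force both to equal zero, and the pointwise Newton inequality used to derive $\mathrm{(A)}$ must then be saturated; this yields $\tilde\kappa_1=\cdots=\tilde\kappa_n$ on $\Sigma$, so $\Sigma$ is umbilic and hence a geodesic sphere in $\N^{n+1}(c)$. For non-constant monotone $\chi$ I would subtract a multiple of $\mathrm{(B)}$ from $\mathrm{(A)}$ by writing $\chi f=(\chi-c)f+cf$ for a comparison constant $c=\inf_\Sigma\chi$, and use $\chi-c\geq 0$ together with the monotonicity of $\chi$ in $(\Phi,\varepsilon\Phi-u)$ to control the correlation of $\chi-c$ with $f$, which again forces pointwise equality in Newton-MacLaurin and hence umbilicity. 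For the center at $p$: once $\Sigma$ is a geodesic sphere, $p_k(\kappa-\varepsilon)$ is constant on $\Sigma$, so $\chi(\Phi(r),\varepsilon\Phi(r)-u)$ is constant along $\Sigma$; strict monotonicity of $\chi$ in either argument then forces $\Phi(r)$ (equivalently $r$) to be constant on $\Sigma$, placing the center at $p$.

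The main obstacle will be the monotonicity-to-rigidity step: extracting pointwise equality in Newton-MacLaurin (and the equality case of Theorem~\ref{thm-shifted H-K ineq}) from the pair $\mathrm{(A)}$, $\mathrm{(B)}$ when $\chi$ varies and $f$ has no a priori definite sign. Correlating the monotonicity of $\chi$ in $(\Phi(r),\varepsilon\Phi(r)-u)$ with the geometric quantities in $f$ through the comparison-constant trick, and invoking the strict-static-convexity equality case of Theorem~\ref{thm-shifted H-K ineq}, constitutes the technical heart of the argument.
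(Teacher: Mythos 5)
There is a genuine gap at the step where you try to combine your inequalities $\mathrm{(A)}$ and $\mathrm{(B)}$. You correctly derive $\int_\Sigma \chi f\,d\mu\le 0$ and $\int_\Sigma f\,d\mu\ge 0$ with $f=(\lambda'-\varepsilon u)/p_1(\kappa-\varepsilon)-u$, but since $f$ has no pointwise sign and $\chi$ is not constant, these two statements are compatible with strict inequality in both and force nothing. Your proposed fix, writing $\chi f=(\chi-c)f+cf$ with $c=\inf_\Sigma\chi$, only yields $\int_\Sigma(\chi-c)f\,d\mu\le -c\int_\Sigma f\,d\mu\le 0$, and with $\chi-c\ge 0$ and $f$ sign-indefinite this is not a contradiction; the ``correlation'' you invoke between $\chi-c$ and $f$ is exactly what is missing and is not supplied by the monotonicity hypotheses at the integrated level. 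The paper avoids this entirely by putting the weight $1/p_k=1/\chi$ \emph{inside} the Minkowski identity before applying Newton--MacLaurin: using $\dot p_k^{ij}\nabla_j\nabla_i\Phi=k\bigl((\lambda'-\varepsilon u)p_{k-1}(\kappa-\varepsilon)-u\,p_k(\kappa-\varepsilon)\bigr)$ and the divergence-free property of $\dot p_k^{ij}$, it writes
\begin{equation*}
\int_\Sigma \Bigl((\lambda'-\varepsilon u)\tfrac{p_{k-1}(\kappa-\varepsilon)}{p_k(\kappa-\varepsilon)}-u\Bigr)d\mu
=\frac{1}{k}\int_\Sigma \frac{\dot p_k^{ij}}{\chi^2}\,\nabla_i\Phi\,\bigl(\partial_1\chi\,\nabla_j\Phi+\partial_2\chi\,\nabla_j(\varepsilon\Phi-u)\bigr)\,d\mu,
\end{equation*}
so that $\partial_1\chi\le 0$, $\partial_2\chi\ge 0$, the positivity of $\dot p_k^{ij}$, and (in Case 1) $h_{ij}>\varepsilon g_{ij}$ make the integrand pointwise nonpositive via $\nabla_i\Phi=\langle V,e_i\rangle$ and $\nabla_i u=h_i{}^j\langle V,e_j\rangle$. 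This nonpositive quantity dominates $\int_\Sigma f\,d\mu\ge 0$ pointwise by Newton--MacLaurin, and the squeeze forces equality in the Heintze--Karcher inequality, giving umbilicity. That weighted integration by parts against $1/\chi$ is the key idea your proposal lacks.

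Your argument for locating the center is also different from the paper's and incomplete: from ``$\chi(\Phi(r),\varepsilon\Phi(r)-u)$ is constant on a geodesic sphere'' plus strict monotonicity in \emph{one} argument you cannot conclude $r$ is constant, because variations in the two arguments can compensate (e.g.\ $\partial_1\chi\le 0$ non-strict and $\partial_2\chi>0$ with both $\Phi(r)$ and $u$ varying along an off-center sphere). In the paper the conclusion comes for free from the pointwise vanishing of the nonpositive integrand above: if either monotonicity is strict, the equality case forces $\sum_i\langle V,e_i\rangle^2=0$, i.e.\ $V$ is everywhere normal to $\Sigma$, which by $\nabla_i\Phi=\langle V,e_i\rangle$ means $r$ is constant on $\Sigma$.
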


\begin{rem} $\ $	
	\begin{enumerate}
		\item The definition domain of $\chi$ can be changed to any domain in $\mathbb{R}^2$ that contains all the possible $(\Phi(r), \varepsilon \Phi(r) -u)$. For example, when $c =-1$, $\varepsilon=1$ and $\Phi(r) =\cosh r$, we have $\Phi(r) \in [1,+\infty)$ and $0<\cosh r- \sinh r \leq \varepsilon \Phi(r) -u<+\infty$. Then we can let $\chi$ be a function defined on $[1, +\infty) \times (0, +\infty )$. 
		\item In the case that $\mathbb{N}^{n+1} = \mathbb{H}^{n+1}$, $\varepsilon =1$ and $\partial_1 \chi =0$, Theorem \ref{thm-s1:shifted-eq} was proved in \cite{HWZ23}. In addition, if $\chi$ is constant and $\Sigma$ is assumed to be horo-convex, then it reduces to \cite[Corollary 1.8]{ACW21}.
	\end{enumerate}
\end{rem}

The paper is organized as follows. In Section \ref{sec-preliminaries}, we collect some basic properties of hypersurfaces in warped product manifolds. In Section \ref{sec-pf thm1.1}, we prove Theorem \ref{thm-+1 HK ineq} by using the unit normal flow in hyperbolic space. In Section \ref{sec-pf thm1.3}, we use Theorem B and the Minkowski formula to prove Theorem \ref{thm-shifted H-K ineq}. In Section \ref{sec-ph thm1.2-1.4}, we use the new Heintze-Karcher type inequalities to prove Theorem \ref{thm-+1 shifted-eq} and Theorem \ref{thm-s1:shifted-eq}.
\begin{ack}
  Part of this work was done during a research visit of B. Xu to the University of Science and Technology of China (USTC). B. Xu would like to thank the School of Mathematical Sciences at USTC, especially Y. Wei, for their hospitality and the excellent research environment provided.
\end{ack}

\section{Preliminaries}\label{sec-preliminaries}
In this section, we collect some basic properties of elementary symmetric polynomials and hypersurfaces in warped product manifolds, especially the properties of hypersurfaces in space forms.
\subsection{Elementary symmetric polynomials} \label{subsec-ele sym func}$ \ $

Given $m \in \{1, \ldots, n\}$, we define the normalized $m$th elementary symmetric polynomial $p_m: \mathbb{R}^n \to \mathbb{R}$ by
\begin{equation}
	p_m (x) = \frac{m!(n-m)!}{n!} \sum_{1 \leq i_1< \cdots< i_m \leq n} x_{i_1} \cdots x_{i_m}, \label{s2:def-pm}
\end{equation}
where $x= (x_1, \ldots, x_n) \in \mathbb{R}^n$. It is easy to see $p_m(1,\ldots,1)=1$. We also set $p_0(x) =1$ and $p_{l} (x) =0$ for $l>n$.  Given a constant $s \in \mathbb{R}$, we denote $x>s$ if $x_i >s$ for all $1 \leq i \leq n$, and we denote $x-s:=(x_1-s,\ldots,x_n-s)\in \mathbb{R}^n$. 

Let $A \in {\rm Sym}(n)$ be an $n \times n$ symmetric matrix. Denote by ${\rm EV}(A)$ the eigenvalues of $A$. By the symmetry of $p_m$, we can define $p_m(A) := p_m (EV(A))$. Then
\begin{equation*}
	p_m (A) = \frac{(n-m)!}{n!} \delta_{i_1, \ldots, i_m}^{j_1, \ldots, j_m} A_{i_1}{}^{j_1} \cdots A_{i_m}{}^ {j_m}, \quad m=1, \ldots, n,
\end{equation*}
where $\delta_{i_1, \ldots, i_m}^{j_1, \ldots, j_m}$ is the generalized Kronecker symbol. Denote 
\begin{equation*}
	(\dot{p}_m)^{i}_j(A) = (\dot{p}_m)^{i}_j ({\rm EV}(A)) = \frac{\partial p_m(A)}{\partial A_i{}^j}.
\end{equation*}
If $A = {\rm diag}\{x_1, \ldots, x_n\}$ is a diagonal matrix, then
\begin{equation*}
	(\dot{p}_m)^{i}_j (A) = \frac{\partial p_m (x)}{\partial x_i} \delta^i_{j}.
\end{equation*}

We have the following basic formulas:
\begin{align}\label{s2:pmij-Aij}
	(\dot{p}_{m})^{i}_j \(A\) A_{i}{}^j =& m p_{m} \(A\), \qquad 
	(\dot{p}_{m})^{j}_j\(A\) \delta_{i}{}^j = mp_{m-1} \(A\).
\end{align}
Define the G{\aa}rding cone by
\begin{equation*}
	\Gamma_m^+ := \{x \in \mathbb{R}^n~|~p_i(x)>0, \ 1 \leq i \leq m  \}.
\end{equation*}
Equivalently, $\Gamma_m^+$ is the connected component of $\{p_m>0\} \subset \mathbb{R}^n$ which contains the positive cone $\Gamma_n^+$.
For $x \in \Gamma_m^+$, we have the following Newton-MacLaurin inequality:
\begin{equation}\label{s2:NM-ineq}
	p_m (x)\leq p_1 (x)p_{m-1}(x),
\end{equation}
with equality if and only if $x_1=\cdots=x_n>0$ when $m\in\{2, \ldots, n\}$. We say matrix $A$ belongs to $\Gamma_m^+$ if ${\rm EV}(A) \in \Gamma^+_m$.  In this case, we have 
\begin{equation}\label{s2:pm-ij-positive}
	(\dot{p}_m)_j^i (A)>0.
\end{equation}

For further properties of elementary symmetric polynomials, we refer readers to \cite{Guan14}.

\subsection{Hypersurfaces in warped product manifolds} \label{subsec-hyper WP}$ \ $

Let $M^{n+1} =  [0, \bar{r}) \times N$ be a warped product manifold with metric
\begin{equation*}
	\bar{g} =dr^2 + \lambda^2(r) g_N,
\end{equation*}
where $N$ is an $n$-dimensional closed manifold, and $\lambda: [0, \bar{r}) \to \mathbb{R}$ is a smooth positive function. Denote by $\bar{\nabla}$ and $\bar{\Delta}$ the Levi-Civita connection and the Laplacian on $M$, respectively. For convenience, we also use the notation $\metric{\cdot}{\cdot}$ for $\bar{g}$. When $M^{n+1}$ has a horizon $\partial M=\{0\}\times N$, we will consider the following condition:
\begin{equation}\tag{\textbf{H}}\label{s2:H condition}
	\lambda'(0) =0, \ \lambda''(0)>0 ~ {\rm and}~\lambda'(r)>0~{\rm for}~r \in (0, \bar{r}).
\end{equation}

For a hypersurface $\Sigma^n \subset M^{n+1}$, we denote by $g$ the induced metric on $\Sigma$, and by $\nabla$ and $\Delta$ the corresponding Levi-Civita connection and Laplacian on $\Sigma$, respectively.

The definition of $p_m$ can be extended to symmetric $(0,2)$ tensor $T_{ij}$ on $\Sigma$ by
\begin{equation*}
	p_m (T, g):= p_m ({\rm EV}(g^{jk} T_{ki})).
\end{equation*}
Denote 
\begin{equation*}
	\dot{p}_m^{ij} ({\rm EV}(g^{-1}T)) = \frac{\partial p_{m} (T,g)}{\partial T_{ij}}.
\end{equation*}

Denote by $\nu$ the outward unit normal of $\Sigma$. Let $\{e_1, \ldots, e_n\}$ be a local orthonormal frame on $\Sigma$, the second fundamental form $h_{ij}$ of $\Sigma$ is defined by $h_{ij} = \metric{\bar{\nabla}_{e_i} \nu}{e_j}$. The principal curvatures $\kappa = \{\kappa_1, \ldots, \kappa_n\}$ are eigenvalues of the Weingarten matrix $(h_i{}^j)$, where $h_i{}^j = h_{ik} g^{kj}$, and $(g^{ij})$ is the inverse matrix of $(g_{ij})$. The mean curvature $p_1(\kappa)$ of $\Sigma$ is given by
\begin{equation*}
	p_1(\kappa) = \frac{1}{n} \sum_{i=1}^n \kappa_i = \frac{1}{n} h_{ij} g^{ij}.
\end{equation*}
For a given constant $\varepsilon \in \mathbb{R}$, we call $\kappa-\varepsilon=(\kappa_1-\varepsilon, \ldots, \kappa_n-\varepsilon)$ the shifted principal curvatures of $\Sigma$, which are eigenvalues of the shifted Weingarten matrix $(h_i{}^j -\varepsilon \delta_i{}^j)$.

Given $\varepsilon \in \mathbb{R}$ and $k \in \{1, \ldots, n\}$, a hypersurface $\Sigma$ is called shifted $k$-convex if $(\kappa -\varepsilon) \in \overline{\Gamma}^+_k$ for all $n$-tuples of shifted principal curvatures along $\Sigma$. It is called strictly shifted $k$-convex if $(\kappa -\varepsilon) \in \Gamma^+_k$ along $\Sigma$, and in this case we have $\dot{p}_m^{ij} (\kappa -\varepsilon) >0$ on $\Sigma$ for all $m=1,\ldots, k$. 

\begin{prop}\label{prop-suff cond of shifted k-convex}
	Let $\Sigma$ be a connected hypersurface in $M^{n+1}$. Assume that $p_k(\kappa -\varepsilon) >0$ on $\Sigma$ and $\kappa-\varepsilon>0$ at some point in $\Sigma$. Then $\Sigma$ is strictly shifted $k$-convex.
\end{prop}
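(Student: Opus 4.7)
The plan is to prove the proposition by a short connectedness argument that exploits the topological definition of the G{\aa}rding cone $\Gamma_k^+$. As recalled earlier in the preliminaries, $\Gamma_k^+$ is the connected component of the open set $\{p_k > 0\} \subset \mathbb{R}^n$ containing the positive cone $\Gamma_n^+$; in particular $\Gamma_k^+$ is both open and closed when viewed as a subset of $\{p_k > 0\}$.

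I consider the set
\[
A := \{x \in \Sigma : \kappa(x) - \varepsilon \in \Gamma_k^+\}.
\]
First, $A$ is nonempty: at a point $x_0 \in \Sigma$ where $\kappa(x_0) - \varepsilon > 0$, the shifted curvatures lie in $\Gamma_n^+ \subset \Gamma_k^+$. Next, I show that both $A$ and $\Sigma \setminus A$ are open in $\Sigma$. The shifted Weingarten operator $W(x)$ with entries $h_i{}^j(x) - \varepsilon \delta_i{}^j$ is self-adjoint and depends continuously on $x$; in a local orthonormal frame it becomes a continuous $\mathrm{Sym}(n)$-valued function, so its unordered eigenvalue tuple is continuous as a map into $\mathbb{R}^n/S_n$. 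Both $\Gamma_k^+$ and $\{p_k > 0\} \setminus \Gamma_k^+$ are $S_n$-invariant open subsets of $\mathbb{R}^n$, and the hypothesis $p_k(\kappa-\varepsilon) > 0$ forces the image of $\Sigma$ to land in $\{p_k > 0\}$. Hence both $A$ and $\Sigma \setminus A$ are open. By connectedness of $\Sigma$ and nonemptiness of $A$, we conclude $A = \Sigma$, which is exactly the assertion of strict shifted $k$-convexity.

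The only real subtlety is that the individual shifted principal curvatures $\kappa_i - \varepsilon$ do not admit a continuous global ordering on $\Sigma$, so one must work with symmetric (permutation-invariant) open sets rather than with ordered tuples. This is precisely what the argument above does: the hypothesis $p_k(\kappa - \varepsilon) > 0$ keeps the image of $\Sigma$ inside the open region where $\Gamma_k^+$ is clopen, and the existence of a single point where $\kappa - \varepsilon \in \Gamma_n^+$ singles out the correct connected component. There is no other obstacle.
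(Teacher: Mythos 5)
Your proof is correct and follows essentially the same route as the paper's: both arguments use that $\Gamma_k^+$ is the connected component of $\{p_k>0\}$ containing $\Gamma_n^+$, that the hypothesis $p_k(\kappa-\varepsilon)>0$ confines the shifted curvatures to $\{p_k>0\}$, and that the point with $\kappa-\varepsilon>0$ pins down the component. You merely spell out the clopen/connectedness bookkeeping (and the permutation-invariance issue for unordered eigenvalues) that the paper's two-sentence proof leaves implicit.
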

\begin{proof}
	The first assumption shows that the shifted principal curvatures $\kappa-\varepsilon$ of $\Sigma$ lie in a connected component of $\{p_k (x) >0\} \subset \mathbb{R}^{n}$. The second assumption shows that this component contains a point in the positive cone $\Gamma^+_n$. Thus, the connected component is $\Gamma^+_k$, and $\Sigma$ is then strictly shifted $k$-convex.
\end{proof}

The following Lemmas \ref{lem-conf-vf-direvative}--\ref{lem-basic formulas} can be found in \cite{Bre13}.
\begin{lem}\label{lem-conf-vf-direvative}
	The vector field $V := \lambda(r) \partial_r$ satisfies
	\begin{align}
		\metric{\overline{\nabla}_X V}{Y} =& \lambda' (r)  \metric{X}{Y}, \label{s2:conf Killing} \\
		\divv_{M} V =& (n+1) \lambda' (r), \label{s2:div-V}
	\end{align}
	where $X$ and $Y$ are vector fields in $M$, and $\divv_{M} V$ denotes the divergence of $V$ in $(M, \bar{g})$. 
\end{lem}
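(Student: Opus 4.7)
The plan is to establish \eqref{s2:conf Killing} by a direct computation exploiting the warped product structure, and then obtain \eqref{s2:div-V} by tracing. Since both sides of \eqref{s2:conf Killing} are tensorial (i.e.\ $C^\infty(M)$-linear) in $X$ and $Y$, it suffices to verify the identity on a convenient local frame. I would split vectors into their radial and horizontal parts: let $\partial_r$ be the radial field and let $X^N,Y^N$ denote lifts of vector fields on $N$ (so they are tangent to the slices $\{r\}\times N$). The standard Koszul computation in the warped product metric $\bar g=dr^2+\lambda(r)^2 g_N$ yields the Christoffel-type formulas
\begin{align*}
\bar\nabla_{\partial_r}\partial_r &= 0,\qquad \bar\nabla_{\partial_r}X^N=\bar\nabla_{X^N}\partial_r=\frac{\lambda'(r)}{\lambda(r)}X^N,\\
\bar\nabla_{X^N}Y^N &= \nabla^N_{X^N}Y^N-\lambda(r)\lambda'(r)\langle X^N,Y^N\rangle_N\,\partial_r,
\end{align*}
where $\nabla^N$ is the Levi-Civita connection of $(N,g_N)$.

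Next I would expand $\bar\nabla_X V=\bar\nabla_X(\lambda\partial_r)=(X\lambda)\partial_r+\lambda\,\bar\nabla_X\partial_r$ and check the four cases $(X,Y)\in\{(\partial_r,\partial_r),(\partial_r,Y^N),(X^N,\partial_r),(X^N,Y^N)\}$. In each case the displayed formulas above combine with $(\partial_r\lambda)=\lambda'$ and $(X^N\lambda)=0$ to give $\bar\nabla_X V=\lambda'(r)X$, which immediately implies \eqref{s2:conf Killing}. The identity \eqref{s2:div-V} then follows by taking a local $\bar g$-orthonormal frame $\{e_0,e_1,\dots,e_n\}$ and summing:
\[
\mathrm{div}_M V=\sum_{i=0}^n\langle\bar\nabla_{e_i}V,e_i\rangle=\lambda'(r)\sum_{i=0}^n\langle e_i,e_i\rangle=(n+1)\lambda'(r).
\]
There is no genuine obstacle here; the only point requiring care is bookkeeping in the warped product connection formulas, and an alternative route (essentially equivalent) is to note that $V=\bar\nabla\Phi$ for a primitive $\Phi$ of $\lambda$, so that \eqref{s2:conf Killing} is just the statement $\bar\nabla^2\Phi=\lambda'\bar g$, and \eqref{s2:div-V} becomes $\bar\Delta\Phi=(n+1)\lambda'$.
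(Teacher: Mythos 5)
Your proof is correct. The paper itself gives no argument for this lemma (it simply cites Brendle's paper), so there is nothing to diverge from; your case-by-case verification using the standard warped-product connection formulas is the expected computation, and each of the four cases checks out, as does the trace giving \eqref{s2:div-V}. Your alternative remark that $V=\bar{\nabla}\Phi$ with $\Phi'=\lambda$, so that \eqref{s2:conf Killing} is just $\bar{\nabla}^2\Phi=\lambda'\bar{g}$, is also the cleanest way to see why the paper's Lemma \ref{lem-basic formulas} formula $\nabla_j\nabla_i\Phi=\lambda' g_{ij}-uh_{ij}$ is the tangential restriction of the same identity.
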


\begin{lem}\label{lem-basic formulas}
    Let $\Sigma$ be a smooth hypersurface in $M^{n+1}$. Let $\Phi(r)$ be a function satisfying $\Phi'(r) = \lambda(r)$. Denote by $u: = \metric{V}{\nu}$ the support function of $\Sigma$. Then
	\begin{align}
		\nabla_i \Phi =& \metric{V}{e_i}, \label{s2:Phi-i} \\
		\nabla_j \nabla_i \Phi =& \lambda' g_{ij} - u h_{ij}, \label{s2:Phi-ij}\\
		\nabla_i u =& h_i{}^j\metric{V}{e_j}. \label{s2:u-i}
	\end{align}
\end{lem}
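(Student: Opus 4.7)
The three identities in Lemma \ref{lem-basic formulas} are all pointwise differential identities, each following from the Gauss/Weingarten equations together with the conformal Killing property of $V=\lambda(r)\partial_r$ already established in Lemma \ref{lem-conf-vf-direvative}. The overall strategy is to view $\Phi$ and $u$ as restrictions of ambient objects, compute the relevant ambient derivative, and then transfer to $\Sigma$ via the Gauss formula.

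For \eqref{s2:Phi-i}: Since $\Phi$ is a function of $r$ only with $\Phi'(r)=\lambda(r)$, the ambient gradient is $\bar\nabla\Phi=\Phi'(r)\,\partial_r=\lambda(r)\partial_r=V$. Restricting to the tangent direction $e_i$ on $\Sigma$ gives $\nabla_i\Phi=\langle\bar\nabla\Phi,e_i\rangle=\langle V,e_i\rangle$.

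For \eqref{s2:Phi-ij}: I will use the standard decomposition of the intrinsic Hessian of a function $f$ on $M$ restricted to $\Sigma$. Namely, from the sign convention $h_{ij}=\langle\bar\nabla_{e_i}\nu,e_j\rangle$ one gets $\langle\bar\nabla_{e_i}e_j,\nu\rangle=-h_{ij}$, so Gauss reads $\bar\nabla_{e_i}e_j=\nabla_{e_i}e_j-h_{ij}\nu$, which yields
\[
\nabla_i\nabla_j f \;=\; \bar\nabla^2 f(e_i,e_j)-h_{ij}\,\nu(f).
\]
Taking $f=\Phi$, one has $\nu(\Phi)=\Phi'(r)\langle\partial_r,\nu\rangle=\langle V,\nu\rangle=u$, while the ambient Hessian is computed directly from \eqref{s2:conf Killing}:
\[
\bar\nabla^2\Phi(e_i,e_j)=\langle\bar\nabla_{e_i}V,e_j\rangle=\lambda'\,g_{ij}.
\]
Substituting gives \eqref{s2:Phi-ij}.

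For \eqref{s2:u-i}: Differentiate $u=\langle V,\nu\rangle$ along $e_i$ and use the metric compatibility of $\bar\nabla$:
\[
\nabla_i u=\langle\bar\nabla_{e_i}V,\nu\rangle+\langle V,\bar\nabla_{e_i}\nu\rangle.
\]
The first term equals $\lambda'\langle e_i,\nu\rangle=0$ by \eqref{s2:conf Killing}. The second equals $\langle V,h_i{}^j e_j\rangle=h_i{}^j\langle V,e_j\rangle$ by the Weingarten formula $\bar\nabla_{e_i}\nu=h_i{}^j e_j$, yielding \eqref{s2:u-i}.

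There is no substantive obstacle here; the only item requiring care is bookkeeping of signs in the Gauss formula consistent with the paper's convention $h_{ij}=\langle\bar\nabla_{e_i}\nu,e_j\rangle$ and the outward choice of $\nu$, which otherwise would flip the sign of the $-u h_{ij}$ term in \eqref{s2:Phi-ij}.
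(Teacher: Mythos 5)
Your proof is correct: all three identities follow exactly as you compute them, and your sign bookkeeping in the Gauss formula is consistent with the paper's convention $h_{ij}=\langle\bar\nabla_{e_i}\nu,e_j\rangle$. The paper itself gives no proof of this lemma, simply citing \cite{Bre13}, and your derivation from the conformal Killing identity \eqref{s2:conf Killing} together with the Gauss--Weingarten formulas is the same standard argument found there.
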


The next lemma follows from \eqref{s2:Phi-ij}.
\begin{lem}\label{lem-Min form}
	Let $\Sigma$ be a closed hypersurface in $M$.  For any $\varepsilon \in \mathbb{R}$, we have
\begin{equation}\label{s2:1st-Min-fml}
	\int_{ \Sigma} (\lambda' -\varepsilon u) d\mu = \int_\Sigma u (p_1(\kappa)-\varepsilon) d\mu.
\end{equation}
\end{lem}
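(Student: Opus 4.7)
The plan is to trace the Hessian identity \eqref{s2:Phi-ij} and integrate. Contracting \eqref{s2:Phi-ij} with $g^{ij}$ gives
\begin{equation*}
\Delta \Phi = g^{ij}\nabla_j\nabla_i \Phi = \lambda' \, g^{ij} g_{ij} - u \, h_{ij} g^{ij} = n\lambda' - n\, u\, p_1(\kappa),
\end{equation*}
since $g^{ij}g_{ij}=n$ and $h_{ij}g^{ij} = n p_1(\kappa)$ by the very definition of the normalized mean curvature.

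Because $\Sigma$ is closed (no boundary), the divergence theorem applied to $\nabla\Phi$ on $\Sigma$ yields $\int_\Sigma \Delta\Phi\, d\mu = 0$, and therefore the classical Minkowski formula
\begin{equation*}
\int_\Sigma \lambda' \, d\mu = \int_\Sigma u\, p_1(\kappa)\, d\mu.
\end{equation*}

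Finally, for any $\varepsilon \in \mathbb{R}$, subtracting the trivially equal quantity $\varepsilon \int_\Sigma u\, d\mu$ from both sides produces
\begin{equation*}
\int_\Sigma (\lambda' - \varepsilon u)\, d\mu = \int_\Sigma u\bigl(p_1(\kappa) - \varepsilon\bigr)\, d\mu,
\end{equation*}
which is the claim. There is essentially no obstacle here; the only point worth noting is that the $\varepsilon$-shifted version is a purely formal consequence of the unshifted Minkowski identity, so the result really depends only on \eqref{s2:Phi-ij} together with $\Sigma$ being closed.
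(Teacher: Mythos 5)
Your proof is correct and follows exactly the route the paper intends: the paper simply states that the lemma ``follows from \eqref{s2:Phi-ij}'', and your argument—tracing that Hessian identity, integrating $\Delta\Phi$ over the closed hypersurface, and then subtracting $\varepsilon\int_\Sigma u\,d\mu$ from both sides—is the natural way to fill in those details.
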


Now we explain the terminology ``sub-static warped product manifold" appearing in Theorem B and Theorem \ref{thm-shifted H-K ineq}, see e.g. \cite{LX19}.  We call $M$ a static (resp. sub-static) warped product manifold with potential $\lambda'$ if the Riemannian triple $(M^{n+1}, \bar{g}, \lambda'(r))$ is static (resp. sub-static). Here a Riemannian triple $(\mathcal{M}, \mathfrak{g}, f)$ is called static if
\begin{equation*}
	\Delta_{\mathcal{M}} f \mathfrak{g} -\nabla_{\mathcal{M}}^2 f +f \Ric_\mathcal{M}=0,
\end{equation*}
and it is called sub-static if
\begin{equation*}
	\Delta_{\mathcal{M}} f \mathfrak{g} -\nabla_{\mathcal{M}}^2 f +f \Ric_\mathcal{M} \geq 0,
\end{equation*}
where $f$ is a nontrivial smooth function on $\mathcal{M}$ called the potential function. It was shown in \cite[Proposition 2.1]{Bre13} that $M^{n+1}$ is a sub-static warped product manifold with potential $\lambda'(r)$ if and only if
\begin{align}
	0 \leq& \lambda'(\Ric_N - (n-1) \rho g_N) \nonumber\\
	&+\(\lambda^2 \lambda''' +(n-2)\lambda\lambda'\lambda''+(n-1)\lambda'(\rho -\lambda'^2) \)g_N \label{s2:criterion sub-static}
\end{align}
for some constant $\rho \in \mathbb{R}$. Many important spaces studied in general relativity are sub-static warped product manifolds, see e.g. \cite{Bre13,BW14,BHW16,CLZ19}.

\subsection{Hypersurfaces in space forms} $ \ $

Denote by $\mathbb{N}^{n+1}(-1) = \mathbb{H}^{n+1}$, $\mathbb{N}^{n+1}(0) =\mathbb{R}^{n+1}$ and $\mathbb{N}^{n+1}(1) =\mathbb{S}^{n+1}_+$ the $(n+1)$-dimensional hyperbolic space, Euclidean space and hemisphere, respectively. For $c \in  \{-1,0,1\}$, let $r(x):= dist(x, p)$ denote the distance function to a fixed point $p \in \mathbb{N}^{n+1}(c)$, so that $\mathbb{N}^{n+1}(c)$ can be realized as a warped product manifold with $p= \{r=0\}$ and metric $\bar{g} = dr^2 +\lambda(r)^2 g_{\mathbb{S}^n}$, where
\begin{equation*}
	\lambda(r) = \left\{
	\begin{aligned}
		&\sinh r, \quad &r \in [0, \infty), \quad &{\rm when} \ c=-1, \\
		&r, \quad &r \in [0, \infty), \quad &{\rm when} \ c=0, \\
		&\sin r, \quad &r \in [0, \pi/2), \quad &{\rm when} \ c=1.
	\end{aligned}
	\right.
\end{equation*}
Taking $\rho =1$ in \eqref{s2:criterion sub-static}, one can check directly that $\mathbb{N}^{n+1}(c)$ is a static (sub-static) manifold with potential $\lambda'(r)$. 

Let $\Omega$ be a bounded domain with smooth boundary $\Sigma=\partial \Omega$ in space form $\mathbb{N}^{n+1}$. The shifted second fundamental form $h_{ij} -\varepsilon g_{ij}$ is a Codazzi tensor on $\Sigma$. Consequently, we have (see \cite[Lemma 2.1]{Guan14})
\begin{equation}\label{s2.divfree}
	\nabla_j \dot{p}_m^{ij} (\kappa -\varepsilon)=0, \qquad m= 1,\ldots, n.
\end{equation} 

Now we prove the following shifted version of the higher order Minkowski formula in space forms, see \cite[Lemma 2.3]{HWZ23} for the case $\mathbb{N}^{n+1} =\mathbb{H}^{n+1}$.
\begin{lem}\label{lem-shifted-Minkowski formula}
	Let $\varepsilon$ be a real number. Assume that $\Sigma$ is a smooth, closed hypersurface in $\mathbb{N}^{n+1}(c)$. Then
	\begin{equation} \label{s2:shifted-min-fml}
		\int_\Sigma (\lambda' - \varepsilon u)p_{m-1}(\kappa-\varepsilon) d\mu = \int_\Sigma u p_{m}(\kappa-\varepsilon) d\mu, \quad m=1,\ldots,n.
	\end{equation}
\end{lem}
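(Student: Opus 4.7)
The idea is to mimic the standard derivation of the higher-order Minkowski formulas in space forms, but with the Newton tensor replaced by the one built from the shifted second fundamental form. The key ingredients are already in place: on $\Sigma \subset \mathbb{N}^{n+1}(c)$ the shifted Weingarten tensor $h_{ij}-\varepsilon g_{ij}$ is a Codazzi tensor, so by \eqref{s2.divfree} the ``Newton tensor'' $\dot{p}_m^{ij}(\kappa-\varepsilon)$ is divergence-free on $\Sigma$; and the Hessian of $\Phi$ is controlled by \eqref{s2:Phi-ij}.

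First I would introduce the tangential vector field $X^i := \dot{p}_m^{ij}(\kappa-\varepsilon)\,\nabla_j \Phi$ on $\Sigma$ and compute its divergence. Using \eqref{s2.divfree} to discard the term in which the derivative falls on $\dot{p}_m^{ij}(\kappa-\varepsilon)$, together with \eqref{s2:Phi-ij}, I get
\begin{equation*}
\nabla_i X^i \;=\; \dot{p}_m^{ij}(\kappa-\varepsilon)\,\nabla_i\nabla_j\Phi \;=\; \dot{p}_m^{ij}(\kappa-\varepsilon)\bigl(\lambda' g_{ij}-u h_{ij}\bigr).
\end{equation*}
The crucial algebraic rewriting is
\begin{equation*}
\lambda' g_{ij}-u h_{ij} \;=\; (\lambda'-\varepsilon u)\,g_{ij}\;-\;u\,(h_{ij}-\varepsilon g_{ij}),
\end{equation*}
which is precisely designed so that the trace identities for the shifted tensor become applicable.

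Next I would invoke the two trace identities for the shifted tensor,
\begin{equation*}
\dot{p}_m^{ij}(\kappa-\varepsilon)\,g_{ij} \;=\; m\,p_{m-1}(\kappa-\varepsilon), \qquad \dot{p}_m^{ij}(\kappa-\varepsilon)\,(h_{ij}-\varepsilon g_{ij}) \;=\; m\,p_m(\kappa-\varepsilon),
\end{equation*}
which follow from \eqref{s2:pmij-Aij} applied to the matrix $A=(h_i{}^j-\varepsilon\delta_i{}^j)$ (so ${\rm EV}(A)=\kappa-\varepsilon$). These give
\begin{equation*}
\nabla_i X^i \;=\; m\bigl[(\lambda'-\varepsilon u)\,p_{m-1}(\kappa-\varepsilon)\;-\;u\,p_m(\kappa-\varepsilon)\bigr].
\end{equation*}
Finally, since $\Sigma$ is closed, $\int_\Sigma \nabla_i X^i\, d\mu = 0$ by the divergence theorem, and dividing by $m$ yields the claimed identity \eqref{s2:shifted-min-fml}.

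There is no real obstacle here; the only thing to be careful about is the bookkeeping of the trace identities for the derivative tensor $\dot{p}_m^{ij}(\kappa-\varepsilon)$, which must be read relative to the shifted $(0,2)$-tensor $h_{ij}-\varepsilon g_{ij}$ rather than $h_{ij}$ itself — that is exactly why the splitting $\lambda'g_{ij}-uh_{ij} = (\lambda'-\varepsilon u)g_{ij} - u(h_{ij}-\varepsilon g_{ij})$ is the natural one to use.
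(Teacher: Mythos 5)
Your proposal is correct and follows essentially the same route as the paper: the paper likewise contracts $\dot{p}_m^{ij}(\kappa-\varepsilon)$ with $\nabla_j\nabla_i\Phi$, uses the splitting $\lambda' g_{ij}-uh_{ij}=(\lambda'-\varepsilon u)g_{ij}-u(h_{ij}-\varepsilon g_{ij})$ together with the trace identities \eqref{s2:pmij-Aij}, and then integrates by parts via the divergence-free property \eqref{s2.divfree}. No gaps.
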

\begin{proof}
	Using \eqref{s2:Phi-ij} and \eqref{s2:pmij-Aij}, we have
	\begin{align}
		&\dot{p}_{m}^{ij} (\kappa-\varepsilon) \nabla_j \nabla_i \Phi \nonumber\\
		=&	\dot{p}_{m}^{ij} (\kappa-\varepsilon) \( \( \lambda' -\varepsilon  u\)g_{ij} - u\(h_{ij} - \varepsilon g_{ij}\)  \) \nonumber\\
		=& \( \lambda' -\varepsilon  u\) \dot{p}_{m}^{ij} (\kappa-\varepsilon) g_{ij} - u 	\dot{p}_{m}^{ij} (\kappa-\varepsilon)\(h_{ij} - \varepsilon g_{ij}\) \nonumber\\
		=& m\( \lambda' -\varepsilon  u\)p_{m-1} (\kappa-\varepsilon)
		-m up_{m} (\kappa-\varepsilon). \label{p-m+1 s2:Phi-ij}
	\end{align}
	By the divergence-free property \eqref{s2.divfree}, applying integration by parts to \eqref{p-m+1 s2:Phi-ij}, we obtain \eqref{s2:shifted-min-fml}.
\end{proof}

We conclude this section with the evolution equations along a general flow 
\begin{equation}\label{s2:eq-general flow}
	\frac{\partial}{\partial t} X = F \nu
\end{equation}
for hypersurfaces $\Sigma_t$ in space form $\mathbb{N}^{n+1} (c)$.
\begin{lem}\label{lem-evl-eq}
	Along the flow \eqref{s2:eq-general flow} in $\mathbb{N}^{n+1} (c)$, we have
	\begin{align}
		\frac{\partial}{\partial t} (\lambda' -\varepsilon u) =& -c u F -\varepsilon \lambda' F + \varepsilon \metric{\nabla F}{V}, \label{s2:evl-lamb-ep u}\\
        \frac{\partial}{\partial t}h_i^j=&-\nabla^j\nabla_iF-Fh_i^kh_k^j-cF\delta_i^j\label{s2:evl-hij}\\
		\frac{\partial }{\partial t} p_1(\kappa) =& -\frac{1}{n} \Delta F - \frac{|h|^2}{n} F -cF, \label{s2:evl-p1}\\
  \frac{\partial}{\partial t} d\mu_t =& n p_1(\kappa) F d\mu_t. \label{s2:evl-dmu}
	\end{align}
	If $\Sigma_t =\partial \Omega_t$ is closed, then
	\begin{equation}
		\frac{\partial}{\partial t} \int_{\Omega_t} \lambda' dv = \int_{\Sigma_t} \lambda' F d\mu_t. \label{s2:evl-weighted vol}
	\end{equation}
\end{lem}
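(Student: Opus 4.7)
The five evolution equations are standard consequences of the flow $\partial_t X = F\nu$ in a space form, but they require careful bookkeeping. The plan is to establish them in the order listed, leveraging a handful of reusable identities that I set up at the beginning.

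First, I record the following auxiliary facts which will be used throughout. Since $\bar{\nabla}r = \partial_r$, the distance function evolves by
\[
\frac{\partial r}{\partial t}=\langle \partial_r,F\nu\rangle=\frac{Fu}{\lambda},
\]
using $u=\langle V,\nu\rangle = \lambda\langle\partial_r,\nu\rangle$. In a space form of curvature $c$ the warping factor satisfies $\lambda''+c\lambda=0$, and the standard formulas $\partial_t\nu=-\nabla F$ and $\partial_t g_{ij}=2Fh_{ij}$ hold. Finally, Lemma \ref{lem-conf-vf-direvative} gives $\bar{\nabla}_X V=\lambda'X$.

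For \eqref{s2:evl-lamb-ep u}, I combine $\partial_t\lambda'=\lambda''\,\partial_t r=-c\lambda\cdot Fu/\lambda=-cFu$ with
\[
\partial_t u=\langle\bar{\nabla}_{F\nu}V,\nu\rangle+\langle V,\partial_t\nu\rangle=F\lambda'-\langle V,\nabla F\rangle,
\]
where the first term uses the conformal Killing identity \eqref{s2:conf Killing}. Subtracting $\varepsilon$ times the second gives the claim. For \eqref{s2:evl-hij}, I would derive the scalar identity $\partial_t h_{ij}=-\nabla_i\nabla_j F+Fh_{ik}h^k{}_j-F\bar{R}_{\nu i\nu j}$ in the usual way (differentiating $h_{ij}=\langle\bar{\nabla}_{e_i}\nu,e_j\rangle$, commuting derivatives, and using $\partial_t\nu=-\nabla F$), then substitute $\bar{R}_{\nu i\nu j}=cg_{ij}$ for a space form and raise an index using $\partial_t g^{ij}=-2Fh^{ij}$; the tangential Hessian terms and the $-Fh^2$ contribution cancel correctly to produce the stated form. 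Equation \eqref{s2:evl-p1} then follows immediately by taking the trace $\frac{1}{n}\delta_i^j$ of \eqref{s2:evl-hij}, since $\text{tr}(h^2)=|h|^2$ and $\text{tr}(\delta)=n$. Equation \eqref{s2:evl-dmu} is the classical area-element formula: $\partial_t\sqrt{\det g}=\tfrac{1}{2}g^{ij}\partial_t g_{ij}\sqrt{\det g}=Fh_i^i\sqrt{\det g}=nFp_1(\kappa)\sqrt{\det g}$. Finally, \eqref{s2:evl-weighted vol} is an instance of the Reynolds transport identity, which reduces to a boundary integral whose normal speed is $F$.

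I do not expect a serious obstacle; the main care required is in \eqref{s2:evl-hij}, where I must be consistent with sign conventions for $h_{ij}$ and the outward normal, and correctly identify the ambient curvature tensor. Once \eqref{s2:evl-hij} is written, the remaining identities are either traces of it or entirely elementary. The use of $\lambda''=-c\lambda$ in \eqref{s2:evl-lamb-ep u} is the only step where the space form hypothesis is invoked explicitly outside of the ambient curvature substitution in \eqref{s2:evl-hij}.
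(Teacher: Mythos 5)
Your proposal is correct and follows essentially the same route as the paper: the computations of $\partial_t\lambda'$ and $\partial_t u$ via $\lambda''+c\lambda=0$, the conformal Killing identity and $\partial_t\nu=-\nabla F$ are identical, and the remaining identities are obtained by tracing \eqref{s2:evl-hij} and by the standard area-form and co-area (transport) formulas, just as in the paper. The only difference is that you sketch a first-principles derivation of \eqref{s2:evl-hij} (correctly, including the index-raising via $\partial_t g^{ij}=-2Fh^{ij}$ and the substitution $\bar{R}_{\nu i\nu j}=cg_{ij}$), whereas the paper simply cites Andrews' formula (3.19) in \cite{And94}.
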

\begin{proof}
	Note that $\lambda'' + c \lambda =0$ in $\mathbb{N}^{n+1}(c)$. Then
	\begin{equation*}
		\frac{\partial}{\partial t} \lambda' = \langle \bar{\nabla}\lambda',\partial_tX\rangle=-c \metric{V}{\partial_t X} = -cuF. 
	\end{equation*}
	The unit normal evolves by $\partial_t\nu=-\nabla F$ (see \cite[(3.17))]{And94}.  Using \eqref{s2:conf Killing}, we have
	\begin{align*}
		\frac{\partial}{\partial t} u = \langle \bar{\nabla}_{\partial_tX}V, {\nu}\rangle +\langle V,\partial_t\nu\rangle =\lambda' F - \metric{V}{\nabla F}.
	\end{align*}
	Combining the above two equations, we obtain \eqref{s2:evl-lamb-ep u}. The equation \eqref{s2:evl-hij} can be found in \cite[(3.19)]{And94}, and \eqref{s2:evl-p1} follows from taking the trace of \eqref{s2:evl-hij}. The equation \eqref{s2:evl-dmu} follows from the standard variation formula of area form.  The last equation \eqref{s2:evl-weighted vol} follows from the co-area formula. 
\end{proof}

\section{Proof of Theorem \ref{thm-+1 HK ineq}} \label{sec-pf thm1.1}
In this section, we prove the Heintze-Karcher type inequality involving the shifted mean curvature $p_1(\kappa +1)$ stated in Theorem \ref{thm-+1 HK ineq}.
\begin{lem}\label{lem-evl eq}
	Consider flow \eqref{s2:eq-general flow} in space form $\mathbb{N}^{n+1} (c)$. Let $\varepsilon \in \mathbb{R}$ be a constant such that $(\lambda' -\varepsilon u) F <0$ and $p_1(\kappa) -\varepsilon \neq 0$ on $\Sigma_t$. Then we have
	\begin{align}
		\partial_t \( \frac{\lambda' -\varepsilon u}{p_1 (\kappa) -\varepsilon} d\mu_t\)
		\leq& (n+1) \varepsilon  \(\frac{\lambda' -\varepsilon u}{p_1 (\kappa) -\varepsilon} - u\) Fd\mu_t +(n+1) \lambda' Fd\mu_t \nonumber \\
		&+\mathcal{T} d\mu_t, \label{s3:evl-Q-main}
	\end{align}		
	where
	\begin{equation*}
		\mathcal{T}:=  \frac{(\lambda'-\varepsilon u) \Delta F}{n(p_1(\kappa) -\varepsilon)^2} +\frac{\varepsilon\metric{\nabla F}{V} }{p_1(\kappa) -\varepsilon} + \frac{\varepsilon^2+c}{p_1(\kappa) -\varepsilon}   \( \frac{\lambda'-\varepsilon u}{p_1(\kappa) -\varepsilon} -u \)F.
	\end{equation*}
	Equality holds in \eqref{s3:evl-Q-main} if and only if $\Sigma_t$ is umbilic at the point we calculate. Moreover, if $\Sigma_t = \partial \Omega_t$ is the boundary of a smooth bounded domain $\Omega_t$, then 
	\begin{align}
		&\frac{\partial}{\partial t} \( \int_{\Sigma_t} \frac{\lambda' -\varepsilon u}{p_1 (\kappa) -\varepsilon} d\mu_t- (n+1) \int_{\Omega_t} \lambda' dv\) \nonumber \\
		\leq& (n+1) \varepsilon \int_{\Sigma_t}  \(\frac{\lambda' -\varepsilon u}{p_1 (\kappa) -\varepsilon} - u\) Fd\mu_t +\int_{\Sigma_t}\mathcal{T} d\mu_t.  \label{s3:evl-Q-extra}
	\end{align}
\end{lem}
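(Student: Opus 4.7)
The plan is a direct computation using the evolution equations in Lemma~\ref{lem-evl-eq}, closed by a single Cauchy-Schwarz inequality on the principal curvatures. Setting $Q := (\lambda'-\varepsilon u)/(p_1(\kappa)-\varepsilon)$, I would expand $\partial_t(Q\,d\mu_t)$ via the product rule and substitute \eqref{s2:evl-lamb-ep u}, \eqref{s2:evl-p1}, and \eqref{s2:evl-dmu}. Every term that emerges is an explicit rational function of $p_1(\kappa)-\varepsilon$, $\lambda'$, $u$ multiplied by $F$, $\Delta F$, or $\metric{\nabla F}{V}$, with the single exception of a term proportional to $|h|^2 F$ arising from $\partial_t p_1(\kappa)$, namely $(\lambda'-\varepsilon u)|h|^2 F/[n(p_1(\kappa)-\varepsilon)^2]$. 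Here the sign hypothesis $(\lambda'-\varepsilon u)F<0$ enters: combined with the Cauchy-Schwarz bound $|h|^2 \geq n\,p_1(\kappa)^2$ (with equality iff $\Sigma_t$ is umbilic at the given point), it gives
\begin{equation*}
\frac{(\lambda'-\varepsilon u)|h|^2 F}{n(p_1(\kappa)-\varepsilon)^2}\ \leq\ \frac{(\lambda'-\varepsilon u)\,p_1(\kappa)^2\,F}{(p_1(\kappa)-\varepsilon)^2},
\end{equation*}
and this is the sole source both of the inequality in \eqref{s3:evl-Q-main} and of its equality characterization.

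The remaining work is algebraic reassembly. The key identity, obtained by writing $p_1(\kappa) = (p_1(\kappa)-\varepsilon)+\varepsilon$ and expanding, is
\begin{equation*}
(n+1)\,p_1(\kappa)^2 - n\varepsilon\,p_1(\kappa) + c = (n+1)(p_1(\kappa)-\varepsilon)^2 + (n+2)\varepsilon(p_1(\kappa)-\varepsilon) + \varepsilon^2 + c.
\end{equation*}
When this is multiplied by $(\lambda'-\varepsilon u)F/(p_1(\kappa)-\varepsilon)^2$ and combined with the $-\varepsilon\lambda' F/(p_1(\kappa)-\varepsilon)$ and $-cuF/(p_1(\kappa)-\varepsilon)$ contributions produced by $\partial_t(\lambda'-\varepsilon u)$, the three resulting pieces regroup into exactly $(n+1)\lambda' F + (n+1)\varepsilon(Q-u)F + (\varepsilon^2+c)(Q-u)F/(p_1(\kappa)-\varepsilon)$, while the $\Delta F$ and $\metric{\nabla F}{V}$ terms pass directly into $\mathcal{T}$. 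Packaging these with the area-form factor $np_1(\kappa)F\,d\mu_t$ coming from \eqref{s2:evl-dmu} then produces \eqref{s3:evl-Q-main}.

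The integrated version \eqref{s3:evl-Q-extra} follows by integrating \eqref{s3:evl-Q-main} over $\Sigma_t$ and invoking \eqref{s2:evl-weighted vol}: the $(n+1)\int_{\Sigma_t}\lambda' F\,d\mu_t$ summand on the right-hand side cancels against $(n+1)\partial_t\int_{\Omega_t}\lambda'\,dv$ moved to the left, leaving exactly the stated expression. The main obstacle is essentially bookkeeping; once the splitting of $p_1(\kappa)$ above is spotted, every coefficient matches on the nose, but careful tracking of the pieces coming from three distinct evolution equations is needed to verify that no residue is left over.
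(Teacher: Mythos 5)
Your proposal is correct and follows essentially the same route as the paper: expand $\partial_t\bigl((\lambda'-\varepsilon u)/(p_1(\kappa)-\varepsilon)\,d\mu_t\bigr)$ using \eqref{s2:evl-lamb-ep u}, \eqref{s2:evl-p1}, \eqref{s2:evl-dmu}, bound the single $|h|^2F$ term via $|h|^2\geq np_1(\kappa)^2$ together with the sign hypothesis $(\lambda'-\varepsilon u)F<0$ (which is also the sole source of the umbilic equality case), and then regroup; your quadratic identity $(n+1)p_1^2-n\varepsilon p_1+c=(n+1)(p_1-\varepsilon)^2+(n+2)\varepsilon(p_1-\varepsilon)+\varepsilon^2+c$ is just a consolidated form of the paper's expansion $p_1^2=(p_1-\varepsilon)^2+2\varepsilon(p_1-\varepsilon)+\varepsilon^2$ combined with the area-form contribution, and all coefficients check out. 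The integrated statement is likewise obtained exactly as in the paper via \eqref{s2:evl-weighted vol}.
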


\begin{proof}
	Using \eqref{s2:evl-lamb-ep u}, \eqref{s2:evl-p1}, the assumption $\(\lambda'-\varepsilon u\) F <0$ and the trace inequality $|h|^2 \geq np_1(\kappa)^2$, we have
	\begin{align}
		&\frac{\partial}{\partial t}  \( \frac{\lambda'-\varepsilon u}{p_1 (\kappa) -\varepsilon} \)  \nonumber\\
		=& \frac{\partial_t \( \lambda' -\varepsilon u \)}{p_1(\kappa) -\varepsilon} - \frac{\lambda'-\varepsilon u}{\( p_1(\kappa)-\varepsilon  \)^2}\frac{\partial}{\partial t} p_1(\kappa) \nonumber\\
		=& \frac{-cuF - \varepsilon \lambda' F +\varepsilon \metric{\nabla F}{V}}{p_1 (\kappa) -\varepsilon}
		+ \frac{\lambda'-\varepsilon u}{(p_1 (\kappa) -\varepsilon)^2} \(\frac{\Delta F}{n} +\frac{|h|^2}{n} F +cF\) \nonumber\\
		\leq & \frac{- \varepsilon (\lambda'-\varepsilon u)F -(\varepsilon^2+c) uF +\varepsilon \metric{\nabla F}{V} }{p_1(\kappa) -\varepsilon} \nonumber\\
		&+  \frac{\lambda'-\varepsilon u}{(p_1 (\kappa) -\varepsilon)^2} \( \frac{\Delta F}{n} + (p_1 (\kappa)-\varepsilon)^2 F + 2 \varepsilon(p_1(\kappa) -\varepsilon) F + (\varepsilon^2+c)F  \) \nonumber\\
		=&\varepsilon \frac{\lambda'-\varepsilon u}{p_1(\kappa) -\varepsilon} F+(\lambda'-\varepsilon u)F +  \frac{\varepsilon\metric{\nabla F}{V} }{p_1(\kappa) -\varepsilon} +\frac{(\lambda'-\varepsilon u) \Delta F}{n(p_1(\kappa) -\varepsilon)^2} \nonumber \\
		&+ \frac{\varepsilon^2+c}{p_1(\kappa) -\varepsilon} \( \frac{\lambda'-\varepsilon u}{p_1(\kappa) -\varepsilon} -u \)F. \label{dt lam-u/p1-eps}
	\end{align}
	Note that equality holds in \eqref{dt lam-u/p1-eps} if and only if $\Sigma_t$ is umbilic at the point. This together with \eqref{s2:evl-dmu} gives
	\begin{align*}
		&\frac{\partial}{\partial t}  \( \frac{\lambda' -\varepsilon u}{p_1 (\kappa) -\varepsilon} d\mu_t \) \\
		\leq& \left( \varepsilon \frac{\lambda'-\varepsilon u}{p_1(\kappa) -\varepsilon} F+(\lambda'-\varepsilon u)F +  \frac{\varepsilon\metric{\nabla F}{V} }{p_1(\kappa) -\varepsilon} +\frac{(\lambda'-\varepsilon u) \Delta F}{n(p_1(\kappa) -\varepsilon)^2} \right. \\
		&\left.+ \frac{\varepsilon^2+c}{p_1(\kappa) -\varepsilon} \( \frac{\lambda'-\varepsilon u}{p_1(\kappa) -\varepsilon} -u \)F +  \frac{\lambda'-\varepsilon u}{p_1(\kappa) -\varepsilon} \(n(p_1(\kappa) - \varepsilon) +n \varepsilon \) F \right)d\mu_t\\\
		=& \left( (n+1)\varepsilon \frac{\lambda'-\varepsilon u}{p_1(\kappa) -\varepsilon} F+ (n+1)(\lambda' -\varepsilon u) F
		+ \frac{\varepsilon\metric{\nabla F}{V} }{p_1(\kappa) -\varepsilon}  \right. \\
		&\left. +\frac{(\lambda'-\varepsilon u) \Delta F}{n(p_1(\kappa) -\varepsilon)^2}+ \frac{\varepsilon^2+c}{p_1(\kappa) -\varepsilon} \( \frac{\lambda'-\varepsilon u}{p_1(\kappa) -\varepsilon} -u \)F \right) d\mu_t\\
		=&\biggl( (n+1) \varepsilon \( \frac{\lambda'-\varepsilon u}{p_1(\kappa) -\varepsilon} -u \) F  +(n+1) \lambda' F + \frac{\varepsilon\metric{\nabla F}{V} }{p_1(\kappa) -\varepsilon} \nonumber\\ 
        & +\frac{(\lambda'-\varepsilon u) \Delta F}{n(p_1(\kappa) -\varepsilon)^2} +\frac{\varepsilon^2 +c}{p_1(\kappa) -\varepsilon} \(  \frac{\lambda'-\varepsilon u}{p_1(\kappa) -\varepsilon} -u \) F\biggr) d\mu_t.
	\end{align*}
	Thus we obtain \eqref{s3:evl-Q-main}. Then \eqref{s3:evl-Q-extra} follows from \eqref{s3:evl-Q-main} and \eqref{s2:evl-weighted vol}. We complete the proof of Lemma \ref{lem-evl eq}.
\end{proof}

\begin{rem}\label{rem-discussion}
	Here, we briefly discuss the difficulty of proving the Heintze-Karcher type inequality involving a general shifted factor $\varepsilon$ by using the flow method. One can show directly that the hard term $\mathcal{T}$ vanishes in the following cases:
	\begin{enumerate}
		\item $c=-1$, $\varepsilon=1$ and $F=-1$,
		\item $c = \pm 1$, $\varepsilon =0$ and $F = -\lambda'$.
	\end{enumerate}
	When $c =-1$, $\varepsilon=1$ and  $F= -1$, the unit normal flow in hyperbolic space was used by Hu-Wei-Zhou \cite{HWZ23} to prove Theorem A. By using \eqref{s2:evl-hij}, it is not difficult to see that the flow preserves the class of horospheres ($\kappa \equiv 1$). That is, $\Sigma_t$ ($t>0$)  is a horosphere if the initial data $\Sigma_0$ is a horosphere. When $c = \pm 1$, $\varepsilon=0$ and $F =-\lambda'$, this flow was used by Brendle \cite{Bre13} to prove \eqref{s1:HK-Brendle}. By using \eqref{s2:evl-hij}, we see that it preserves the class of geodesic hyperplanes ($\kappa \equiv 0$). By a direct calculation, we find that the following flow preserves the class of equidistant hypersurfaces with constant principal curvature $\varepsilon$ in $\mathbb{H}^{n+1}$: 
	\begin{equation*}
		\partial_t X=F \nu, \quad 	F = - \( \lambda'-\varepsilon u \) \xi (u -\varepsilon \lambda'),
	\end{equation*}
	where $\xi$ is an arbitrary $C^2$ function. In this case, the coefficient of $\nabla p_1$ in further expansions of $\mathcal{T}$ vanishes if and only if $\xi$ satisfies the equation
	\begin{equation*}
		\frac{d}{dt}\log \xi(t) =  \frac{\varepsilon}{(1-\varepsilon^2) \lambda' -\varepsilon t}.
	\end{equation*}
	The above equation can be solved when $\varepsilon (1-\varepsilon^2) =0$.  After a direct calculation, we find that considering the flows in space forms studied in \cite{Bre13} and \cite{HWZ23}, the remaining manageable case is that
 \begin{enumerate}
		\item[(3)]  $c=-1$, $\varepsilon=-1$ and $F=-1$.
	\end{enumerate}
 Note also that, when $F$ is a constant, the term $\mathcal{T}$ vanishes if and only if $\varepsilon^2+c =0$. Hence, it is difficult to prove Heintze-Karcher type inequality involving a non-zero shifted factor $\varepsilon$ by using the unit normal flow in Euclidean space and hemisphere.
\end{rem}

Let $\Omega$ be a bounded domain with smooth boundary $\Sigma = \partial \Omega$ in $\mathbb{H}^{n+1}$, and assume that the mean curvature of $\Sigma$ satisfies $p_1(\kappa)>-1$. Inspired by the discussion in the above Remark \ref{rem-discussion}, let us consider the normal exponential map  $X: \Sigma \times [0, \infty) \to \bar{\Omega}$ given by $X(x,t)=\exp_x(-t\nu(x))$. For each point $q \in \bar{\Omega}$, let $w(q) = \text{dist}(q, \Sigma)$. Define
\begin{align*}
	A =& \{ (x,t) \in \Sigma \in [0, \infty) ~:~w\( X(x,t) \) =t \},\\
	A^* =&\{ (x,t) \in \Sigma \times [0, \infty): (x, t+\delta) \in A \ {\rm for \ some} \ \delta>0 \}.
\end{align*}
It is known that the sets $A$ and $A^*$ have the following properties \cite{Bre13}:
\begin{enumerate}
	\item If $(x, t_0) \in A$, then $(x,t) \in A$ for all $t \in [0, t_0]$.
	\item The set $A$ is closed, and we have $X(A) = \bar{\Omega}$.
	\item The set $A^*$ is an open subset of $\Sigma \times [0, +\infty)$, and the restriction $X|_{A^*}$ is a diffeomorphism.
\end{enumerate}
Note that the cut locus $C$ of $\Sigma$ has finite $n$-dimensional Hausdorff measure\cite{LN05}. Since $\Sigma$ is compact, and the distance from the points in $\Sigma$ to their cut points in $C$ is a Lipschitz continuous function on $\Sigma$ \cite{LN05}, there exists $0 \leq T <\infty$ such that $0 \leq w(q) \leq T$ for all $q \in \bar{\Omega}$. For each $t \in [0, T]$, define
\begin{equation*}
	\Sigma_t^* = X \(A^* \cap (\Sigma \times \{t \}) \).
\end{equation*}
It holds that $\Sigma_t^* = \{w=t \} \setminus C$  is a smooth hypersurface which is contained in the level set $\{w =t\}$, and $\Omega \setminus C = X(A^*) = \cup_{0<t<T} \Sigma_t^*$. It is easy to see that $\Sigma_t^*$ satisfies the unit normal flow 
\begin{equation}\label{s3.ptX}
    \partial_t X (y,t)= -\nu (y,t),
\end{equation}
where $\nu = -\bar{\nabla} w / |\bar{\nabla} w|$ denotes the outward unit normal of $\Sigma_t^*$.

\begin{lem}\label{lem-finite area}
	The mean curvature of $\Sigma_t^*$ satisfies $p_1(\kappa)>-1$. The area  of $\Sigma_t^*$ satisfies $\mu (\Sigma_t^*) < e^{nt} \mu (\Sigma)< e^{nT}\mu(\Sigma)$ for all $t \in [0, T)$.
\end{lem}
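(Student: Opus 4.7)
The plan is to apply the evolution equations of Lemma~\ref{lem-evl-eq} to the unit normal flow \eqref{s3.ptX}, which corresponds to $F\equiv -1$ and $c=-1$ in hyperbolic space. Combining \eqref{s2:evl-p1} with the standard trace inequality $|h|^2 \geq n\, p_1(\kappa)^2$, the mean curvature evolves along each flow line by
\begin{equation*}
    \partial_t p_1(\kappa) \;=\; \frac{|h|^2}{n}-1 \;\geq\; p_1(\kappa)^2 - 1.
\end{equation*}
Setting $f := p_1(\kappa)+1$ recasts this as $\partial_t f \geq f(f-2)$, while the hypothesis on $\Sigma$ gives $f>0$ initially. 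Since $\Sigma_t^*$ consists of smooth, non-cut-locus points, $f$ remains finite there; as long as $f>0$ along a flow line, one can legitimately write $(\log f)' \geq f - 2 \geq -2$ and integrate to get $f(y,t)\geq f(y,0)\,e^{-2t}>0$. A standard continuity argument then rules out $f$ ever reaching zero, which is exactly $p_1(\kappa) > -1$ on $\Sigma_t^*$.

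For the area estimate, I would combine the just-proved bound $p_1(\kappa) > -1$ with \eqref{s2:evl-dmu} at $F=-1$ to obtain, pointwise for the Jacobian of the flow map restricted to $A^*$,
\begin{equation*}
    \partial_t\, d\mu_t \;=\; -n\, p_1(\kappa)\, d\mu_t \;<\; n\, d\mu_t.
\end{equation*}
A direct integration in $t$ yields $d\mu_t < e^{nt}\, d\mu_0$ pointwise for $t>0$. Integrating this over $X^{-1}(\Sigma_t^*)\subset\Sigma$ and bounding the integration domain by $\Sigma$ produces $\mu(\Sigma_t^*) < e^{nt}\mu(\Sigma)$ for $t>0$; combined with $t<T$ giving $e^{nt}<e^{nT}$, the second stated inequality follows as well.

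The main obstacle is the preservation statement $p_1(\kappa)>-1$, since a naive ODE comparison with the barrier $\psi'=\psi^2-1$ is complicated at points where $p_1(\kappa)>1$ initially (there $\psi$ itself blows up in finite time, so one must track the joint interval of existence carefully). The logarithmic trick in the first paragraph sidesteps this issue: it only requires $f$ to stay positive and finite, both of which are automatic once one works on $\Sigma_t^*$, without needing to compare against a globally defined barrier.
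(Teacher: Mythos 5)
Your proposal is correct and follows essentially the same route as the paper: both derive $\partial_t(p_1(\kappa)+1)\geq (p_1(\kappa)+1)(p_1(\kappa)-1)$ from \eqref{s2:evl-p1} with $F=-1$, $c=-1$ and the trace inequality, conclude preservation of $p_1(\kappa)>-1$ along each normal flow line, and then integrate \eqref{s2:evl-dmu} for the area bound. The only cosmetic difference is that the paper invokes the parabolic maximum principle where you carry out the (equivalent, and in fact more transparent, since $F$ is constant and there is no diffusion term) ODE comparison explicitly via the logarithmic estimate.
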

\begin{proof}
	Taking $F=-1$ and $c=-1$ in \eqref{s2:evl-p1} and using $|h|^2 \geq n p_1(\kappa)^2$, at each point in $\Sigma_t^*$ we have
	\begin{equation*}
		\frac{\partial }{\partial t} (p_1(\kappa)+1)= \frac{|h|^2}{n}-1 \geq  (p_1(\kappa) +1) \( p_1(\kappa) -1 \).
	\end{equation*}
	Using the parabolic maximum principle, we have $p_1(\kappa) > -1$ on $\Sigma_t^*$. Then \eqref{s2:evl-dmu} yields that the area form $d\mu_t$ of $\Sigma_t^*$ satisfies $\partial_t d\mu_t < n d\mu_t.$ Then the second assertion follows.
\end{proof}

The following crucial lemma was proved by Hu-Wei-Zhou \cite{HWZ23}.
\begin{lem}[\cite{HWZ23}]\label{lem-HWZ}
	For a.e. $t \in (0,T)$, we have
	\begin{equation*}
		\int_{\Sigma_t^*} u d\mu_t  = (n+1) \int_{ \{w>t \} } \lambda' dv. 
	\end{equation*}
\end{lem}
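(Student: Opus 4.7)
The plan is to recognize the right-hand side as a volume integral of a divergence and then apply a Gauss--Green identity to the open set $\{w > t\}$. Since Lemma~\ref{lem-conf-vf-direvative} yields $\divv_M V = (n+1)\lambda'$, the target identity is equivalent to the Gauss--Green statement
\begin{equation*}
\int_{\{w > t\}} \divv_M V \, dv \;=\; \int_{\Sigma_t^*} u \, d\mu_t,
\end{equation*}
because on $\Sigma_t^*$ the outer unit normal of the region $\{w > t\}$ is precisely $\nu = -\bar\nabla w/|\bar\nabla w|$, so $\langle V, \nu\rangle = u$ there.

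First, I would establish this identity via sets of finite perimeter. The distance function $w$ is $1$-Lipschitz on $\bar{\Omega}$, so $\chi_{\{w > t\}}$ is a $BV$ function and $\{w > t\}$ has finite perimeter in $M$. By De Giorgi's Gauss--Green theorem,
\begin{equation*}
\int_{\{w > t\}} \divv_M V \, dv \;=\; \int_{\partial^* \{w > t\}} \langle V, \nu^*\rangle \, d\mathcal{H}^n,
\end{equation*}
where $\partial^*$ denotes the reduced boundary and $\nu^*$ the measure-theoretic outer unit normal. On the smooth part $\Sigma_t^* = \{w = t\} \setminus C$ the reduced boundary coincides locally with $\Sigma_t^*$ and $\nu^*$ agrees with the geometric normal $\nu$, contributing exactly $\int_{\Sigma_t^*} u \, d\mu_t$.

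Second, I would verify that the remaining portion $C \cap \{w = t\}$ of $\partial^* \{w > t\}$ is $\mathcal{H}^n$-null for a.e.\ $t \in (0, T)$. The paper already cites \cite{LN05} for the facts that the cut locus $C$ has finite $n$-dimensional Hausdorff measure and that $w|_C$ is $1$-Lipschitz; applying the co-area formula on $C$ to $w|_C$ then shows that for a.e.\ $t$ the slice $C \cap \{w = t\}$ has finite $\mathcal{H}^{n-1}$-measure, and hence $\mathcal{H}^n$-measure zero in $M$. For such $t$ the reduced boundary $\partial^* \{w > t\}$ agrees with $\Sigma_t^*$ up to an $\mathcal{H}^n$-null set, and the Gauss--Green identity above becomes exactly the claim.

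The hard part will be the measure-theoretic identification of $\partial^* \{w > t\}$ with $\Sigma_t^*$ modulo a null set for almost every $t$, for which the cut-locus regularity from \cite{LN05} is essential. A more hands-on alternative, avoiding finite-perimeter machinery, is to pull everything back through the diffeomorphism $X|_{A^*}\colon A^* \to \Omega \setminus C$ and work in the parameter cylinder $\Sigma \times [0, \infty)$: writing both sides as Fubini integrals there and differentiating in $t$ using the unit-normal-flow equations $\partial_t u = -\lambda'$ and $\partial_t d\mu_t = -n p_1(\kappa) \, d\mu_t$ (which follow from \eqref{s2:evl-lamb-ep u} and \eqref{s2:evl-dmu} with $F = -1$, $c = -1$) would reduce the problem to proving $\int_{\Sigma_t^*} (\lambda' - u p_1(\kappa))\, d\mu_t = 0$, i.e., a Minkowski-type identity on the non-closed hypersurface $\Sigma_t^*$ whose boundary term in Stokes' theorem must be discarded using the same cut-locus measure estimate.
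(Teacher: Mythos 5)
Your proposal is correct and takes essentially the same route as the source the paper cites for this lemma (the paper itself quotes it from \cite{HWZ23} without reproducing the proof): apply the Gauss--Green theorem for sets of finite perimeter to $\{w>t\}$ using $\divv_M V=(n+1)\lambda'$, and identify $\partial^*\{w>t\}$ with $\Sigma_t^*$ up to an $\mathcal{H}^n$-null set for a.e.\ $t$, the cut-locus slices $C\cap\{w=t\}$ being negligible because $\mathcal{H}^n(C)<\infty$ by \cite{LN05} (the pairwise disjointness of these slices already gives nullity for all but countably many $t$, so the Eilenberg coarea step is not even needed). The only minor imprecision is the claim that $\{w>t\}$ has finite perimeter for every $t$: superlevel sets of the Lipschitz function $w$ are guaranteed finite-perimeter only for a.e.\ $t$ (via the BV coarea formula or Federer's criterion), which is exactly what the almost-everywhere statement of the lemma requires.
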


Now we introduce the quantity
\begin{equation*}
	Q(t) = e^{-(n+1)  t}\( \int_{\Sigma_t^*} \frac{\lambda'+u}{p_1(\kappa) +1} d\mu_t -(n+1) \int_{ \{w>t \} } \lambda' dv \).
\end{equation*}

\begin{lem}\label{lem-mono Q}
	For any $t \in [0,T)$, there holds $Q(0) \geq Q(t)$.
\end{lem}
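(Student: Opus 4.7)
The plan is to apply Lemma \ref{lem-evl eq} with the parameters $c=-1$ (hyperbolic ambient space), $\varepsilon=-1$, and flow speed $F\equiv -1$, which matches the unit normal flow \eqref{s3.ptX} driving $\Sigma_t^*$. The crucial numerical coincidence is that $\varepsilon^2+c=0$ and $F$ is constant, so $\nabla F\equiv 0$ and $\Delta F\equiv 0$; inspecting the formula for $\mathcal{T}$ in Lemma \ref{lem-evl eq} shows that each of its three terms vanishes identically. This is exactly case (3) singled out in Remark \ref{rem-discussion}. Moreover $\lambda'+u \geq \cosh r - \sinh r = e^{-r}>0$ in $\mathbb{H}^{n+1}$ and $p_1(\kappa)+1>0$ on $\Sigma_t^*$ by Lemma \ref{lem-finite area}, so $(\lambda'-\varepsilon u)F=-(\lambda'+u)<0$ and the denominator $p_1(\kappa)-\varepsilon$ is non-zero, verifying the hypotheses of Lemma \ref{lem-evl eq}.

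With these choices the pointwise inequality \eqref{s3:evl-Q-main} reduces to
\begin{equation*}
\partial_t\!\left(\frac{\lambda'+u}{p_1(\kappa)+1}\,d\mu_t\right)\ \leq\ (n+1)\!\left(\frac{\lambda'+u}{p_1(\kappa)+1}-u\right)\!d\mu_t\ -\ (n+1)\lambda'\,d\mu_t
\end{equation*}
on the smooth part $\Sigma_t^*$. Integrating over $\Sigma_t^*$ requires some care, because $\Sigma_t^*=X(A^*\cap(\Sigma\times\{t\}))$ with $A^*\cap(\Sigma\times\{t\})$ monotonically shrinking as $t$ increases; pulling back by the diffeomorphism $X|_{A^*}$ to $\Sigma$ and writing $\int_{\Sigma_t^*}\frac{\lambda'+u}{p_1(\kappa)+1}\,d\mu_t = \int_{A^*(t)}\frac{\lambda'+u}{p_1(\kappa)+1}\,J(\cdot,t)\,d\mu$, the integrand is non-negative, so the boundary contribution from the receding cut locus is non-positive and the inequality persists after integration.

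Next, for the weighted volume piece of $Q(t)$, the co-area/variational formula \eqref{s2:evl-weighted vol} applied to $\Omega_t=\{w>t\}$ with $F=-1$ gives
\begin{equation*}
\frac{d}{dt}\int_{\{w>t\}}\lambda'\,dv\ =\ -\int_{\Sigma_t^*}\lambda'\,d\mu_t,
\end{equation*}
which precisely cancels the last $-(n+1)\int_{\Sigma_t^*}\lambda'\,d\mu_t$ term above. Using Lemma \ref{lem-HWZ} to substitute $\int_{\Sigma_t^*}u\,d\mu_t=(n+1)\int_{\{w>t\}}\lambda'\,dv$, we find that the function
\begin{equation*}
I(t):=\int_{\Sigma_t^*}\frac{\lambda'+u}{p_1(\kappa)+1}\,d\mu_t\ -\ (n+1)\int_{\{w>t\}}\lambda'\,dv
\end{equation*}
satisfies $I'(t)\leq (n+1)\,I(t)$ for a.e.\ $t\in(0,T)$. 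This is equivalent to $(e^{-(n+1)t}I(t))'\leq 0$, i.e.\ $Q$ is non-increasing, and in particular $Q(0)\geq Q(t)$.

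The only nontrivial step in this plan is the rigorous justification of the integrated differential inequality on the shrinking piece $\Sigma_t^*$; the remainder is a direct algebraic bookkeeping once one notices the miracle $\varepsilon^2+c=0$ that forces $\mathcal{T}\equiv 0$. Everything else — positivity of $\lambda'+u$ and $p_1(\kappa)+1$, finiteness of the areas, applicability of Lemma \ref{lem-HWZ} — has already been established earlier in the paper.
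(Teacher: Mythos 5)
Your proposal is correct and follows essentially the same route as the paper: take $c=-1$, $\varepsilon=-1$, $F=-1$ in Lemma \ref{lem-evl eq} so that $\mathcal{T}\equiv 0$, use the co-area formula for $\frac{d}{dt}\int_{\{w>t\}}\lambda'\,dv$, invoke Lemma \ref{lem-HWZ} to kill the remaining term, and absorb the shrinking of $\Sigma_t^*$ across the cut locus by the non-negativity of the integrand (the paper phrases this via a backward $\limsup$ difference quotient of $Q$, which is the same device as your Jacobian/receding-domain remark). No gaps.
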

\begin{proof}
	Since $\Sigma_t^*$ satisfies the unit normal flow \eqref{s3.ptX}, the co-area formula implies that
	\begin{equation}
		\int_{ \{w>t \} } \lambda' dv = \int_t^T\int_{\Sigma_{\tau}^*} \lambda' d\mu_\tau d\tau. \label{s3:co-area}
	\end{equation}
	Then 
	\begin{equation*}
		\frac{d}{dt} \int_{ \{w>t\} } \lambda' dv = - \int_{\Sigma_t^*} \lambda' d\mu_t.
	\end{equation*}
	Note that $\lambda' +u  = \lambda' + \metric{\lambda \partial_r}{\nu}\geq \lambda'-\lambda >0$ and $p_1(\kappa) +1>0$ on $\Sigma_t^*$. Taking $c=-1$, $\varepsilon =-1$ and $F =-1$ in the evolution equation \eqref{s3:evl-Q-main}, we have
	\begin{align*}
		&\limsup_{h \searrow 0} \frac{1}{h} (Q(t) -Q(t-h))\\
		\leq& -(n+1) Q(t)  +e^{-(n+1) t} \(   \int_{\Sigma_t^*} \partial_t \(\frac{\lambda'+u}{p_1(\kappa) +1} d\mu_t  \) +(n+1) \int_{\Sigma_t^*} \lambda' d\mu_t  \)\\
		\leq& -(n+1) Q(t) + (n+1) e^{-(n+1) t} \int_{\Sigma_t^*} \(  \frac{\lambda'+ u}{p_1(\kappa) +1} -u  \) d\mu_t\\
		=& (n+1) e^{-(n+1) t}  \( (n+1)\int_{ \{w>t \} } \lambda' dv- \int_{\Sigma_t^*} u d\mu_t\).
	\end{align*}
	It follows from Lemma \ref{lem-HWZ} that the last line vanishes for a.e. $t \in (0,T)$. Then for any $t \in (0,T)$, we have
	\begin{align*}
		Q(t) - Q(0) \leq& \int_0^t (n+1) e^{-(n+1) \tau}  \( (n+1)\int_{ \{w>\tau \} } \lambda' dv- \int_{\Sigma_{\tau}^*} u d\mu_\tau \) d\tau\\
		=& 0.
	\end{align*}
	We complete the proof of Lemma \ref{lem-mono Q}.
\end{proof}

\begin{proof}[Proof of Theorem \ref{thm-+1 HK ineq} ]
	Note that $\Sigma = \Sigma_0^*$ and $\Omega = \{w>0 \}$. It follows from Lemma \ref{lem-mono Q} that
	\begin{equation}\label{s3:Q_0 geq Q_inf}
		\int_\Sigma \frac{\lambda' +u}{p_1(\kappa) +1} d\mu - (n+1) \int_\Omega \lambda' d v = Q(0) \geq \liminf_{t \to T} Q(t).
	\end{equation}
	On the other hand, since $\lambda'+u>0$ and $p_1(\kappa) +1>0$ on $\Sigma_t^*$ for all $t \in [0,T)$, we have from \eqref{s3:co-area} that
	\begin{equation*}
		Q(t) \geq -(n+1) e^{-(n+1) t} \int_{ \{w>t \} } \lambda' dv \geq  -(n+1) \int_t^T \int_{\Sigma_{\tau}^*} \lambda' d\mu_\tau d\tau.
	\end{equation*}
	Then the uniform upper bound of $\mu (\Sigma_t^*)$ given in Lemma \ref{lem-finite area} implies 
	\begin{equation}\label{s3:liminf-Q}
		\liminf_{t \to T} Q(t) \geq 0.
	\end{equation}
	Combining \eqref{s3:Q_0 geq Q_inf} and \eqref{s3:liminf-Q}, we obtain the desired Heintze-Karcher type inequality \eqref{s1:HK-main-eps=-1}. If equality holds in \eqref{s1:HK-main-eps=-1}, then Lemma \ref{lem-evl eq} implies that $\Sigma$ is a closed, umbilic hypersurface in $\mathbb{H}^{n+1}$, and thus $\Sigma$ is a geodesic sphere. We complete the proof of Theorem \ref{thm-+1 HK ineq}.
\end{proof}

\section{Proof of Theorem \ref{thm-shifted H-K ineq}}\label{sec-pf thm1.3}
In this section, we prove the Heintze-Karcher type inequality involving the shifted mean curvature with a general shifted factor $\varepsilon$ stated in Theorem \ref{thm-shifted H-K ineq}.

If $\partial \Omega =\Sigma$, then we can use the divergence theorem and \eqref{s2:div-V} to get
	\begin{equation}\label{s4:div fml-no horizon}
		\int_\Sigma u d\mu = \int_\Sigma \metric{V}{\nu} d\mu 
		= \int_\Omega \divv_{M} V dv =(n+1) \int_\Omega \lambda' dv.  
	\end{equation}
	If $M$ has a horizon $\partial M =\{0\} \times N$, then $\bar{g} |_{\partial M} = \lambda(0)^2 g_N$ and $\metric{\partial_r}{\nu} = -1$ on $\partial M$. If $\partial\Omega=\Sigma\cup\partial M$, we have
	\begin{align}
		\int_\Sigma u d\mu =&(n+1) \int_\Omega \lambda' dv - \int_{\{0\} \times N} \metric{\lambda(0) \partial_r}{\nu} d\mu \nonumber \\
		=&(n+1) \int_\Omega \lambda' dv  +  \lambda(0)^{n+1} \Vol(N, g_N). \label{s4:div fml-has horizon}
	\end{align}
	Thus, both \eqref{s1:min2-LX19-1} and \eqref{s1:min2-LX19-2} in Theorem B can be written as
	\begin{equation}\label{s4:general min-2 ineq}
		\( \int_{ \Sigma} \lambda' d\mu \)^2 \geq	\int_\Sigma u d\mu\int_\Sigma \lambda' p_1(\kappa) d\mu.
	\end{equation}
	Using the Minkowski formula \eqref{s2:1st-Min-fml}, we have
	\begin{align}
		&	\int_\Sigma u d\mu\int_\Sigma \lambda' p_1(\kappa) d\mu  \nonumber \\
		=& 	\int_\Sigma u d\mu \int_\Sigma \((\lambda' - \varepsilon u) + \varepsilon u\) \(p_1(\kappa-\varepsilon) + \varepsilon  \) d\mu  \nonumber\\
		=&	\int_\Sigma u d\mu \int_\Sigma (\lambda' - \varepsilon u)p_1(\kappa-\varepsilon) d\mu
		+ \varepsilon^2 \(  \int_\Sigma u d\mu \)^2  \nonumber \\
		&+ \int_\Sigma u d\mu \int_\Sigma \varepsilon \( (\lambda' - \varepsilon u) + u p_1(\kappa-\varepsilon) \) d\mu \nonumber \\
		=&	\int_\Sigma u d\mu \int_\Sigma (\lambda' - \varepsilon u)p_1(\kappa-\varepsilon) d\mu + \varepsilon^2 \(  \int_\Sigma u d\mu \)^2 \nonumber\\
		&+2 \varepsilon 	\int_\Sigma u d\mu 	 \int_\Sigma \(\lambda'-\varepsilon u\) d\mu. \label{s4:rhs of LX19}
	\end{align}
	Meanwhile, we compute
	\begin{align}
		\( \int_\Sigma \lambda' d\mu \)^2
		=& \( \int_\Sigma \(\(\lambda'-\varepsilon u\) + \varepsilon u\) d\mu \)^2  \nonumber\\
		=& 	\( \int_\Sigma \(\lambda'-\varepsilon u\) d\mu \)^2 + \varepsilon^2	\( \int_\Sigma u d\mu \)^2 \nonumber\\
		&+ 2\varepsilon \int_\Sigma \(\lambda' -\varepsilon u \)d\mu \int_\Sigma u d\mu. \label{s4:lhs of LX19}
	\end{align}
	Inserting \eqref{s4:rhs of LX19} and \eqref{s4:lhs of LX19} into \eqref{s4:general min-2 ineq}, we have
	\begin{equation}\label{equiv-Xia16-ineq}
		\( \int_\Sigma \(\lambda'-\varepsilon u\) d\mu \)^2
		\geq 	\int_\Sigma u d\mu \int_\Sigma (\lambda' - \varepsilon u)p_1(\kappa-\varepsilon) d\mu.
	\end{equation}
	On the other hand, by using the assumption \eqref{s1:assump in HK} and the Cauchy-Schwarz inequality, we get
	\begin{equation}\label{s4:cau-sch ineq}
		\( \int_\Sigma \(\lambda'-\varepsilon u\) d\mu \)^2
		\leq \int_\Sigma \(\lambda'-\varepsilon u\) p_1(\kappa-\varepsilon) d\mu \int_\Sigma  \frac{\lambda'-\varepsilon u}{p_1(\kappa-\varepsilon)} d\mu.
	\end{equation}
	Note that equality holds in \eqref{s4:cau-sch ineq} if and only if $p_1(\kappa)$ is constant on $\Sigma$. Combining \eqref{equiv-Xia16-ineq} with \eqref{s4:cau-sch ineq}, we have
	\begin{equation*}
		\int_\Sigma  \frac{\lambda'-\varepsilon u}{p_1(\kappa-\varepsilon)} d\mu
		\geq \int_\Sigma u d\mu.
	\end{equation*}
	Then inequalities \eqref{s1:shifted HK general-eps-1} and \eqref{s1:shifted HK general-eps-2} follow by inserting \eqref{s4:div fml-no horizon} and \eqref{s4:div fml-has horizon} into the above inequality, respectively. The cases that equalities hold in these inequalities inherit from Theorem B. We complete the proof of Theorem \ref{thm-shifted H-K ineq}.

\section{Proofs of Theorem \ref{thm-+1 shifted-eq} and Theorem \ref{thm-s1:shifted-eq}} \label{sec-ph thm1.2-1.4}
In this section, we prove the uniqueness result of solutions to a class of curvature equations stated in Theorem \ref{thm-+1 shifted-eq} and Theorem \ref{thm-s1:shifted-eq} by using the Heintze-Karcher type inequalities established in Theorem \ref{thm-+1 HK ineq} and Theorem \ref{thm-shifted H-K ineq}, respectively.
Since their proofs are similar, we first prove Theorem \ref{thm-s1:shifted-eq} and then point out the differences between this proof and that of Theorem \ref{thm-+1 shifted-eq}.

\begin{proof}[Proof of Theorem \ref{thm-s1:shifted-eq}]
	For abbreviation, we denote
	\begin{equation*}
		\dot{p}_k^{ij} : =	\dot{p}_k^{ij}(\kappa-\varepsilon) = \frac{\partial p_k (\kappa -\varepsilon)}{\partial (h_{ij} -\varepsilon g_{ij})}.
	\end{equation*}
    Since $\Sigma$ is strictly shifted $k$-convex, we have $\chi >0$ and that $\dot{p}_k^{ij}$ is positive definite on $\Sigma$.
    Using \eqref{p-m+1 s2:Phi-ij}, \eqref{s1:shifted-eq} and integration by parts, we have
	\begin{align}
		&\int_\Sigma \(\(\lambda' -\varepsilon u\) \frac{p_{k-1}  (\kappa-\varepsilon)}{p_k (\kappa-\varepsilon)} -u \)d\mu \nonumber\\
		=& \int_\Sigma \frac{(\lambda' - \varepsilon u) p_{k-1} (\kappa-\varepsilon) - up_k (\kappa-\varepsilon)}{p_k (\kappa-\varepsilon)} d\mu\nonumber\\
		=& \frac{1}{k}\int_\Sigma \frac{\dot{p}_k^{ij}\nabla_j \nabla_i \Phi(r)  }{\chi} d\mu \nonumber\\
		=&- \frac{1}{k}\int_\Sigma \dot{p}_k^{ij} \nabla_i \Phi(r) \nabla_j \( \frac{1}{\chi}\) d\mu \nonumber\\
		=&\frac{1}{k}\int_\Sigma \frac{ \dot{p}_k^{ij}}{\chi^2} \nabla_i \Phi \( \partial_1 \chi \nabla_j \Phi+\partial_2 \chi \nabla_j \( \varepsilon \Phi -u \) \) d\mu,\label{s5:integ by part}
	\end{align}
	 where in the third equality we used the divergence free property \eqref{s2.divfree} of $\dot{p}_k^{ij}$. For any fixed point $q \in \Sigma$, we take an orthogonal basis $\{e_1, \ldots, e_n\}$ at $q$ such that $g_{ij} = \delta_{ij}$ and $h_{ij} = \kappa_i \delta_{ij}$. Recall the assumptions $\partial_1 \chi \leq 0$ and $\partial_2 \chi \geq 0$. Using \eqref{s2:Phi-i} and \eqref{s2:u-i}, we have
	\begin{equation}\label{rigid-chi_1}
		\partial_1 \chi\dot{p}_k^{ij} \nabla_i \Phi \nabla_j \Phi =  \partial_1 \chi\sum_{i=1}^n  \dot{p}_k^{ii}  \metric{V}{e_i}^2 \leq 0
	\end{equation}
 and 
 	\begin{align}
		\partial_2 \chi\dot{p}_k^{ij} \nabla_i \Phi \nabla_j \(\varepsilon \Phi -u\)
		=& \partial_2 \chi\dot{p}_k^{ij} \metric{V}{e_i} \(\varepsilon \metric{V}{e_j}- h_j{}^l\metric{V}{e_l}\) \nonumber\\
		=&- \partial_2 \chi\sum_{i=1}^n \dot{p}_k^{ii}  \metric{V}{e_i}^2 (\kappa_i -\varepsilon).  \label{rigid-chi_2}
	\end{align}
 Inserting \eqref{rigid-chi_1} and \eqref{rigid-chi_2} into \eqref{s5:integ by part}, we obtain
	\begin{align}\label{s5:integral leq 0}
		& \int_\Sigma \(\(\lambda' -\varepsilon u\) \frac{p_{k-1}  (\kappa-\varepsilon)}{p_k (\kappa-\varepsilon)} -u \)d\mu \nonumber \\
  \leq  &-\frac{1}{k}\int_{\Sigma}\frac{\partial_2\chi}{\chi^2}\sum_{i=1}^n \dot{p}_k^{ii}  \metric{V}{e_i}^2 (\kappa_i -\varepsilon).
	\end{align}
When $\partial_2 \chi \equiv 0$, the right hand side of  \eqref{s5:integral leq 0} vanishes; when  $\partial_2 \chi \not\equiv 0$, the assumption $h_{ij} > \varepsilon g_{ij}$ implies that the right hand side of \eqref{s5:integral leq 0} is non-positive. That is, we have 
\begin{align}\label{s5:integral leq 0b}
	 \int_\Sigma \(\(\lambda' -\varepsilon u\) \frac{p_{k-1}  (\kappa-\varepsilon)}{p_k (\kappa-\varepsilon)} -u \)d\mu \leq 0
	\end{align}
in both cases. 

	On the other hand, by using the Newton-MacLaurin inequality \eqref{s2:NM-ineq}, the assumption $\lambda'-\varepsilon u>0$, \eqref{s4:div fml-no horizon} and Theorem \ref{thm-shifted H-K ineq}, we have
	\begin{align}
		&\int_\Sigma \(\(\lambda' -\varepsilon u\) \frac{p_{k-1}  (\kappa-\varepsilon)}{p_k (\kappa-\varepsilon)} -u\) d\mu \nonumber\\ \geq&
		\int_\Sigma  \(\frac{\lambda' -\varepsilon u}{p_1 (\kappa-\varepsilon)} -u\) d\mu \nonumber\\
        =& \int_{\Sigma} \frac{\lambda' -\varepsilon u}{p_1 (\kappa)-\varepsilon}d\mu - (n+1)\int_{\Omega} \lambda' dv  \geq 0.\label{integral geq 0}
	\end{align}
	Comparing  \eqref{s5:integral leq 0b} with \eqref{integral geq 0}, we have that equality holds in the second inequality in \eqref{integral geq 0},  which implies that $\Sigma$ is a geodesic sphere. 
	
	Moreover, if either $\partial_1 \chi < 0$ or $\partial_2 \chi>0$, then it follows from \eqref{rigid-chi_1} and \eqref{rigid-chi_2} that $\sum_{i=1}^n \metric{V}{e_i}^2 =0$ everywhere on $\Sigma$. Then \eqref{s2:Phi-i} implies that $\Sigma$ is a geodesic sphere centered at $p$. We complete the proof of Theorem \ref{thm-s1:shifted-eq}.
\end{proof}

\begin{proof}[Proof of Theorem \ref{thm-+1 shifted-eq}]
	Since there exists at least one point on $\Sigma$ at which $h_{ij}>g_{ij}$, it follows from Proposition \ref{prop-suff cond of shifted k-convex} that the assumption $p_k (\kappa+1)>0$ implies $\dot{p}_k^{ij} >0$ on $\Sigma$, and that the inequality \eqref{s2:NM-ineq} is valid. 

    Recall that $\Phi(r)$ is defined as a function satisfying $\Phi'(r) = \lambda(r)$. Hence, we can set $\Phi(r)=\cosh r=\lambda'$ when the ambient space is $\mathbb{H}^{n+1}$. The remainder of the proof of Theorem \ref{thm-+1 shifted-eq} is almost the same as that of Theorem \ref{thm-s1:shifted-eq} by taking $\varepsilon =-1$. The only difference is that instead of Theorem \ref{thm-shifted H-K ineq}, we apply Theorem \ref{thm-+1 HK ineq} in \eqref{integral geq 0}. We complete the proof of Theorem \ref{thm-+1 shifted-eq}.
\end{proof}

\section*{Declarations}

\textbf{Funding}.  The first author was supported by NSFC grant No. 11831005 and NSFC-FWO Grant No. 11961131001.
    The second author was supported by National Key Research and Development Program of China 2021YFA1001800 and 2020YFA0713100, and the Fundamental Research Funds for the Central Universities.
	The research leading to these results is part of a project that has received funding from the European Research Council (ERC) under the European Union's Horizon 2020 research and innovation programme (grant agreement No 101001677). 

\textbf{Competing interests}. The authors have no relevant financial or non-financial interests to disclose.

\begin{bibdiv}
\begin{biblist}
\bib{Alex56}{article}{
	author={Alexandrov, A.D.},
	title={Uniqueness theorems for surfaces in the large I},
	journal={Vestn. Leningr. Univ.},
	volume={11},
	year={1956},
	pages={5--17},
}

\bib{And94}{article}{
   author={Andrews, B.},
   title={Contraction of convex hypersurfaces in Riemannian spaces},
   journal={J. Differential Geom.},
   volume={39},
   date={1994},
   number={2},
   pages={407--431},
}

		
\bib{ACW21}{article}{
	author={Andrews, B.},
	author={Chen, X.},
	author={Wei, Y.},
	title={Volume preserving flow and Alexandrov-Fenchel type inequalities in
		hyperbolic space},
	journal={J. Eur. Math. Soc. (JEMS)},
	volume={23},
	date={2021},
	number={7},
	pages={2467--2509},
}
		
\bib{Bre13}{article}{
	author={Brendle, S.},
	title={Constant mean curvature surfaces in warped product manifolds},
	journal={Publications math\'ematiques de l'IH\'ES},
	volume={117},
	year={2013},
	pages={247--269},
}

\bib{BH17}{article}{
   author={Brendle, S.},
   author={Huisken, G.},
   title={A fully nonlinear flow for two-convex hypersurfaces in Riemannian
   manifolds},
   journal={Invent. Math.},
   volume={210},
   date={2017},
   number={2},
   pages={559--613},
}

\bib{BHW16}{article}{
	author={Brendle, S.},
	author={Hung, P.-K.},
	author={Wang, M.-T.},
	title={A Minkowski inequality for hypersurfaces in the anti--de
		Sitter--Schwarzschild manifold},
	journal={Comm. Pure Appl. Math.},
	volume={69},
	date={2016},
	number={1},
	pages={124--144},
}
				
\bib{BW14}{article}{
	author={Brendle, S.},
	author={Wang, M.-T.},
	title={A Gibbons-Penrose inequality for surfaces in Schwarzschild
		spacetime},
	journal={Comm. Math. Phys.},
	volume={330},
	date={2014},
	number={1},
	pages={33--43},
}
		
\bib{CLZ19}{article}{
	author={Chen, D.},
	author={Li, H.},
	author={Zhou, T.},
	title={A Penrose type inequality for graphs over
		Reissner-Nordstr\"{o}m--anti-deSitter manifold},
	journal={J. Math. Phys.},
	volume={60},
	date={2019},
	number={4},
	pages={043503, 12pp},
}

\bib{deLima16}{article}{
   author={de Lima, L. L.},
   author={Girao, F.},
   title={An Alexandrov-Fenchel-type inequality in hyperbolic space with an
   application to a Penrose inequality},
   journal={Ann. Henri Poincar\'{e}},
   volume={17},
   date={2016},
   number={4},
   pages={979--1002},
}


\bib{Guan14}{article}{
   author={Guan, P.},
   title={Curvature measures, isoperimetric type inequalities and fully
   nonlinear PDEs},
   conference={
      title={Fully nonlinear PDEs in real and complex geometry and optics},
   },
   book={
      series={Lecture Notes in Math.},
      volume={2087},
      publisher={Springer, Cham},
   },
   date={2014},
   pages={47--94},
}

\bib{GWWX15}{article}{
   author={Ge, Y.},
   author={Wang, G.},
   author={Wu, J.},
   author={Xia, C.},
   title={A Penrose inequality for graphs over Kottler space},
   journal={Calc. Var. Partial Differential Equations},
   volume={52},
   date={2015},
   number={3-4},
   pages={755--782},
}
		
\bib{HK78}{article}{
	author={Heintze, E.},
	author={Karcher, H.},
	title={A general comparison theorem with applications to volume estimates for submanifolds},
	journal={Ann. Sci. Ecole Norm. Sup.},
	volume={11},
	year={1978},
	pages={451--470},
}
		
\bib{HL22}{article}{
	author={Hu, Y.},
	author={Li, H.},
	title={Geometric inequalities for static convex domains in hyperbolic space},
	journal={Trans. Amer. Math. Soc.},
	volume={376},
	number={8},
	pages={5587--5615},
	year={2022},
}
		
\bib{HLW20}{article}{
	author={Hu, Y.},
	author={Li, H.},
	author={Wei, Y.},
	title={Locally constrained curvature flows and geometric inequalities in
		hyperbolic space},
	journal={Math. Ann.},
	volume={382},
	date={2022},
	number={3-4},
	pages={1425--1474},
}
				
\bib{HWZ23}{article}{
	author={Hu, Y.},
	author={Wei, Y.},
	author={Zhou, T.}
	title={A Heintze-Karcher type inequality in hyperbolic space},
	  journal={J. Geom. Anal.},
    volume={34},
    number={4},
	year={2024},
    pages={article no. 113, 17pp},
}

\bib{LX22}{article}{
	author={Li, H.},
	author={Xu, B.},
	title={Hyperbolic $p$-sum and horospherical $p$-Brunn-Minkowski theory in hyperbolic space},
	eprint={arXiv:2211.06875},
	year={2022},
}
		
\bib{LX19}{article}{
	author={Li, J.},
	author={Xia, C.},
	title={An integral formula and its applications on sub-static manifolds},
	journal={J. Differential Geom.},
	volume = {113},
	number = {2},
	pages = {493--518},
	year = {2019},
}
		
\bib{LN05}{article}{
	author={Li, Y.},
	author={Nirenberg, L.},
	title={The distance function to the boundary, Finsler geometry, and the singular set of viscosity solution},
	year={2005},
	journal={Comm. Pure Appl. Math.},
	volume={58},
	pages={85--146},
}
		

\bib{Lyn22}{article}{
   author={Lynch, S.},
   title={Convexity estimates for hypersurfaces moving by concave curvature
   functions},
   journal={Duke Math. J.},
   volume={171},
   date={2022},
   number={10},
   pages={2047--2088},
}

\bib{MR91}{article}{
   author={Montiel, S.},
   author={Ros, A.},
   title={Compact hypersurfaces: the Alexandrov theorem for higher order
   mean curvatures},
   conference={
      title={Differential geometry},
   },
   book={
      series={Pitman Monogr. Surveys Pure Appl. Math.},
      volume={52},
      publisher={Longman Sci. Tech., Harlow},
   },
   date={1991},
   pages={279--296},
}

\bib{PY23}{article}{
	author={Pan, S.},
	author={Yang, B.},
	title={The weighted geometric inequalities for static convex domains in static rotationally symmetric spaces},
	eprint={arXiv:2311.02364},
	year={2023},
}

\bib{QX15}{article}{
	author={Qiu, G.},
	author={Xia, C.},
	title={A generalization of Reilly's formula and its applications to a new
		Heintze-Karcher type inequality},
	journal={Int. Math. Res. Not. IMRN},
	date={2015},
	number={17},
	pages={7608--7619},
}		
		
\bib{Rei77}{article}{
	author={Reilly, R.C.},
	title={Applications of the Hessian operator in a Riemannian manifold},
	journal={Indiana Univ. Math. J.},
	volume={26},
	year={1977},
	pages={459--472},
}
		
\bib{Ros87}{article}{
	author={Ros, A.},
	title={Compact hypersurfaces with constant higher order mean curvatures},
	journal={Rev. Mat. Iberoamericana},
	volume={3},
	date={1987},
	number={3-4},
	pages={447--453},
}

\bib{SX19}{article}{
   author={Scheuer, J.},
   author={Xia, C.},
   title={Locally constrained inverse curvature flows},
   journal={Trans. Amer. Math. Soc.},
   volume={372},
   date={2019},
   number={10},
   pages={6771--6803},
}

\bib{Sch14}{book}{
	author={Schneider, R.},
	title={Convex bodies: the Brunn-Minkowski theory},
	series={Encyclopedia of Mathematics and its Applications},
	volume={151},
	edition={Second expanded edition},
	publisher={Cambridge University Press, Cambridge},
	date={2014},
	pages={xxii+736},
}

\bib{WWZ22}{article}{
	author={Wang, X.},
	author={Wei, Y.},
	author={Zhou, T.},
	title={Shifted inverse curvature flows in hyperbolic space},
	journal={Calc. Var. Partial Differential Equations},
	volume={62},
	date={2023},
	number={3},
	pages={Paper No. 93, 44 pp},
}
		
		
\bib{Xia16}{article}{
	author={Xia, C.},
	title={A Minkowski type inequality in space forms},
	journal={Calc. Var. Partial Differential Equations},
	volume={55},
	date={2016},
	number={4},
	pages={Art. 96, 8 pp},
}
		
\end{biblist}
\end{bibdiv}

\end{document}